\documentclass[a4paper,11pt]{article}
\usepackage[hmargin={25mm,25mm},vmargin={25mm,30mm}]{geometry}
\usepackage{amsmath,amsfonts,amsthm,amssymb,mathtools}
\usepackage{cancel}
\usepackage{comment}
\usepackage{enumerate}
\usepackage{graphicx}
\usepackage[ocgcolorlinks,allcolors={blue}]{hyperref}
\usepackage{lmodern}
\usepackage{hyperref}

\usepackage{xcolor}

\usepackage{authblk}
\usepackage[percent]{overpic}
\usepackage{epstopdf}

\usepackage{tabularx}
\usepackage{multirow}
\usepackage{booktabs}
\usepackage{bm}
\mathtoolsset{showonlyrefs} 

\usepackage{tcolorbox}

\usepackage{bbold}

\newtheorem{theorem}{Theorem}
\newtheorem{proposition}[theorem]{Proposition}
\newtheorem{lemma}[theorem]{Lemma}
\newtheorem{corollary}[theorem]{Corollary}
\theoremstyle{remark}
\newtheorem{remark}[theorem]{Remark}
\theoremstyle{definition}

\newcommand{\uu}{\mathbf{u}}

\usepackage{stackengine}

\usepackage{newtxtext,newtxmath}

\newcommand{\ale}[1]{{\color{red}#1}}
\usepackage{soul}

\usepackage[normalem]{ulem}
\newcounter{corr}
\definecolor{violet}{rgb}{0.580,0.,0.827}
\newcommand{\corr}[3]{\typeout{Warning : a correction remains in page \thepage}
\stepcounter{corr}        
{\color{blue}\ifmmode\text{\,\sout{\ensuremath{#1}}\,}\else\sout{#1}\fi}{\color{red}#2}{\color{violet} [#3]}
}
\definecolor{mygreen}{rgb}{0,0.4,0.2}

\newcommand{\Pol}{\mathbb{P}}

\newcommand{\PiNablaE}{\Pi^{\nabla,E}_k}
\newcommand{\PiNablaF}{\Pi^{\nabla,F}_k}
\newcommand{\PiZeroE}{\Pi^{0,E}_k}
\newcommand{\PiZeroF}{\Pi^{0,F}_k}
\newcommand{\email}[1]{\href{mailto:#1}{#1}}

\usepackage[dvipsnames]{xcolor}
\usepackage{xstring}

\newcounter{colorindex}
\newcounter{charpos}

\newcommand{\rainbow}[1]{%
  \StrLen{#1}[\textlength]
  \ifnum\textlength>0
    \setcounter{colorindex}{0}%
    \setcounter{charpos}{1}%
    \loop
      \StrChar{#1}{\value{charpos}}[\currentchar]%
      \IfStrEq{\currentchar}{ }{ }{%
        \stepcounter{colorindex}%
        \ifcase\value{colorindex}%
          \or\textcolor{red}{\currentchar}%
          \or\textcolor{orange}{\currentchar}%
          \or\textcolor{yellow}{\currentchar}%
          \or\textcolor{green}{\currentchar}%
          \or\textcolor{blue}{\currentchar}%
          \or\textcolor{violet}{\currentchar}\setcounter{colorindex}{0}%
        \fi
      }%
    \ifnum\value{charpos}<\textlength
      \stepcounter{charpos}%
    \repeat
  \fi
}

\title{Conforming/Non-conforming Virtual Elements and application to elasticity problems in curved three-dimensional domains}

\author[1,2]{L. Beir\~{a}o da Veiga\footnote{\email{lourenco.beirao@unimib.it}}}
\author[1]{F. Dassi \footnote{\email{franco.dassi@unimib.it}}}
\author[1,2]{A. Russo \footnote{\email{alessandro.russo@unimib.it}}}
\author[1]{M. Trezzi \footnote{\email{manuel.trezzi@unimib.it}}}

\affil[1]{Dipartimento di Matematica e Applicazioni, 
Universit\`a degli Studi di Milano-Bicocca, 
Via Roberto Cozzi 55  - 20125 Milano, Italy}

\affil[2]{CNR-IMATI, Via Ferrata 1 - 27100 Pavia, Italy}

\begin{document}

\maketitle

\section{Introduction} 

The Virtual Element Method (VEM) is a fairly recent methodology for the discretization of problems in Partial Differential Equations and, since its inception in 2013 \cite{BeiradaVeiga-Brezzi-Cangiani-Manzini-Marini-Russo:2013, BeiradaVeiga-Brezzi-Marini-Russo:2014} enjoyed a wide success in the mathematical and engineering communities. The main trademark of VEM is its flexibility in dealing with meshes composed of general (even non-convex or not simply connected) polytopes, yielding potential advantages in handling complex geometries, grid refinement and coarsening, moving meshes, high distortions, cracks, multiple dimensional domains, etc.  

In the mathematical community of Finite Elements (FEM), when dealing with problems that are naturally conforming in $H^1$, two prominent and distinct approaches are using conforming or non-conforming elements. The first choice is the most classical and has the advantage of yielding a solution which is in the natural variational space and globally continuous, but the second one is also appealing due to specific properties, related for instance to robustness to locking and conservation properties.

In the present article the flexibility of VEM is exploited in a novel way. We develop a Virtual Element Method of general order which is conforming/non-conforming, in the sense that the user is free to choose which faces of the mesh are to be treated as conforming (the solution will glue continuously across such faces) and which are to be treated as non-conforming. The ensuing local VEM space is built through an ad-hoc definition exploiting local (element-wise) problems with mixed Dirichlet/Neumann boundary conditions. Such a construction would be essentially impossible with standard FEM; therefore, even when using standard tetrahedral grids, the resulting method fundamentally differs from classical FEM.

After defining the element, its degrees of freedom and the computability of the associated projections, we showcase a model development for the three dimensional linear elasticity problem. We develop the method and prove its convergence properties, which requires a stability analysis of the involved discrete forms and interpolation bounds for the novel spaces here proposed.

Afterwards, as an example of the potential utility of such VEM spaces, we present an application for 3D problems (still in the realm of linear elasticity) posed on curved domains with exact geometry approximation. Indeed it must be noted that, while the VEM for 2D problems with curved boundaries is nowadays quite developed (see for instance \cite{BeiraodaVeiga-Russo-Vacca:2019, BeiraodaVeiga-Brezzi-Marini-Russo:2020, Aldakheel2020, art-bdv-das, wriggers2020virtual, Dassi-Fumagalli-Losapio-Scialo-Scotti-Vacca,Dassi-Fumagalli-Mazzieri-Scotti-Vacca,da2024c} or \cite{Botti-DiPietro:2018,Yemm:2023,Warburton2013,Chan2017,Kawecki2020} for other related technologies), VEM on curved domains in three space dimensions still encounter major challenges. Possible approaches are shown in \cite{curved3D}, where a peculiar master-slave construction is required, and \cite{bertoluzza2022weakly}, where the boundary conditions need to be enforced weakly with a suitable technique. None of the above, although very interesting, contributions makes simply use of a primal Virtual Element Space, but introduce specific constructions to accomodate for the curved boundary. 
Another approach is that in \cite{DASSI20221}, but where a mixed formulation with discontinuous discrete variables is adopted.

Another possible answer was shown in \cite{curvo-Yi,Liu2025}, but making use of non-conforming VEM: as a consequence, when such idea is applied in the context of solid mechanics, the ensuing displacement solution will be non-continuous, which is non-physical. In the present contribution, by exploiting the idea described above, we propose a methodology which uses curved non-planar and non-conforming faces only on the boundary of the 3D domain and planar (typically polygonal) conforming faces inside the domain. In such a way, we are able to deal with curved geometries and deliver a continuous displacement solution. 

We test this proposed approach with a series of numerical benchmarks, including (1) standard straight geometries, (2) curved geometries with exact mesh approximation and (3) curved geometries with patch-wise, non-piecewise planar, mesh approximation. The results are encouraging and in line with the theoretical predictions.

The paper is organized as follows. After presenting the novel methodology in Section \ref{sec:1}, we develop its application for linear elasticity in Section \ref{sec:2}, including theoretical derivations such as error bounds. In Section \ref{sec:3} we detail its extension to curved domains. Finally, in Section \ref{sec:4} we develop the numerical tests. 

\section{The C-NC Virtual Element Space on standard polyhedral meshes}\label{sec:1}

\subsection{Basic notations and mesh assumptions}\label{sec:mesh-ass}

For a generic measurable set $D$ and $m \geq 0$, we denote as usual the Hilbert space $H^m(D)$ as the standard Sobolev space of square integrable functions with square integrable derivatives up to order $m$. We denote by $\| \cdot \|_{m,D}$ and $| \cdot |_{m,D}$ the standard norm and seminorm associated with $H^m(D)$, respectively. In particular, for $m=0$, $H^0(D)$ coincides with $L^2(D)$, and we denote its norm as $\| \cdot \|_{0,D}$. Given a mesh partition $\Omega_h$ of the domain $\Omega$, we define the broken global space as
\[
H^m(\Omega_h) \coloneqq \{ v \in L^2(\Omega) \mid v|_E \in H^m(E) \quad \forall E \in \Omega_h \}
\]
equipped with the broken global norm and seminorm
\[
\| v \|^2_{m,\Omega_h} \coloneqq \sum_{E \in \Omega_h} \| v \|^2_{m,E} \,,
\qquad
| v |^2_{m,\Omega_h} \coloneqq \sum_{E \in \Omega_h} | v |^2_{m,E} \,.
\]
Moreover, the space of polynomials up to degree $k$ on a geometrical entity $D$ is denoted with $\mathbb P_k(D)$. \medskip 

We consider a family of meshes $\{\Omega_h\}_h$, which are conforming partitions (in the standard Finite Element sense) of the domain $\Omega$ into polyhedrons. We denote by $E$ a generic element of the mesh, by $F$ a generic face, by $e$ a generic edge, and by $\nu$ a generic vertex.
Let $D$ be an element, a face, or an edge, we denote with $h_D$ its diameter, and with $|D|$ its size.
We make the following standard VEM assumption: it exists a constant $\rho$ independent of $h$ such that: 
\begin{itemize}
\item each polyhedron $E$ and each face $F$ are star-shaped with respect to a ball of radius greater than or equal to $\rho h_E$ and $\rho h_F$, respectively;
\item $h_F \geq \rho h_E$ holds for every face $F$.
\end{itemize}

\subsection{Conforming Virtual Element Spaces on faces}\label{sec:facevem}
Before introducing the C-NC VEM space in 3D, we first recall the definition of the 2D ``enhanced'' nodal VEM space. 
Indeed, the construction of the 3D VEM space \cite{Ahmad-Alsaedi-Brezzi-Marini-Russo:2013, BeiradaVeiga-Brezzi-Marini-Russo:2014} requires the preliminary definition of a VEM space on the faces of the mesh. 
Note that this step is not required for the nonconforming VEM space \cite{AyusodeDios-Lipnikov-Manzini:2016}. 
Let $F$ be a face in $\Omega_h$ (i.e. a planar polygon); we define the space
\begin{equation}\label{eq:vem-space-face}
    \begin{aligned}
    V_h^k(F) := \Big\{ v_h \in C^0(F) \cap H^1(F) \, : \, & v_h|_e \in \Pol_k(e) \ \forall e \subset \partial F 
    \, , \Delta v_h \in \Pol_{k}(F) \, , \\
    & \int_F (v_h-\PiNablaF v_h) p_k\,{\rm d}F = 0 \ \forall \ p_k \in \Pol_k/\Pol_{k-2}(F) \Big\} \, ,
\end{aligned}
\end{equation}
where the Laplace differential operator $\Delta$ is intended as planar, 
$\Pol_k/\Pol_{k-2}(F)$ denotes the space spanned by the scaled monomials of degree $k$ and $k-1$, and
the operator $\PiNablaF : H^1(F) \rightarrow \Pol_k(F)$ is the classical one defined in \cite{BeiradaVeiga-Brezzi-Cangiani-Manzini-Marini-Russo:2013}:
$$
\left\{
\begin{aligned}
&\int_F \nabla (I - \PiNablaF) v_h \cdot \nabla p_k \,{\rm d}F = 0 \qquad \forall v_h \in V_h^k(F), \, \forall p_k\in\Pol_k(F), \\
&P_0(v_h - \PiNablaF) = 0 \qquad \forall v_h \in V^k_h(F),
\end{aligned}
\right .
$$
where $P_0: V^k_h(F) \rightarrow \mathbb{R}$ is a projection onto the space of constants, which is necessary to uniquely determine the projection.
Following \cite{BeiradaVeiga-Brezzi-Marini-Russo:2014}, we make the following choice for $P_0$:
\begin{equation} \label{eq:P0}
P_0(v_h) :=
\int_{\partial F} v_h \,{\rm d}s \qquad \forall v_h \in V_h^k(F) \, .
\end{equation}
As shown in \cite{Ahmad-Alsaedi-Brezzi-Marini-Russo:2013}, a set of degrees of freedom for the space \eqref{eq:vem-space-face} is given by:

\smallskip\noindent
{\bf Conforming face DoFs} ($v_h \in V_h^k(F)$):
\begin{itemize}
\item pointwise evaluation $v_h(\nu)$ at all $\nu \in \partial F$ ;
\vspace{-2mm}
\item pointwise evaluation of $v_h$ at $k-2$ distinct points of each $e \subset \partial F$;
\vspace{-2mm}
\item moments $\frac{1}{|F|}\int_F v_h \, \varphi_i$, $i=1,..,M$ with $\{ \varphi_i\}_{i=1}^M$ basis of $\Pol_{k-2}(F)$ with $\|\varphi_i \|_{L^\infty(F)} = 1 $.
\end{itemize}
We highlight that for $k=1$, the last two sets of Dofs are empty.
\begin{remark}\label{mimikyu}
It can be easily shown that the $L^2(F)$ projection operator $\Pi_E^{0,k} : V_h^k(F) \rightarrow \Pol_k(F)$ defined as
\[
\int_F (I - \PiZeroF) v_h \, p_k \,{\rm d}F = 0 \qquad \forall v_h \in V_h^k(F), \, \forall p_k\in\Pol_k(F),
\]
is computable only using the knowledge on the above degrees of freedom values and the enhancing technique \cite{BeiradaVeiga-Brezzi-Marini-Russo:2014}. Note that $\PiNablaF$ and $\PiZeroF$ coincide for $k=1$.
\end{remark}

\subsection{The C-NC Virtual Element Space}\label{sec:NC-space}
Given a polyhedron $E \in \Omega_h$ we split its boundary faces into two subsets: conforming ${\cal F}_E^c$ and non-conforming ${\cal F}_E^n$. We denote by $\Gamma^c_E$ the element boundary region given by the closure of the union of all faces in ${\cal F}_E^c$, and by $\Gamma^n_E = \partial E / \Gamma^c_E$. 
We define the space
\begin{equation}\label{eq:c-nc-space}
\begin{aligned}
W_h^k(E) := \Big\{ v_h \in H^1(E) \, & :\, \Delta v \in \Pol_{k-2}(E) , 
\ v_h|_{\Gamma^{c}_E} \in C^0(\Gamma^{c}_E) \, ,\\
&v|_F \in V_h^k(F) \ \forall \ F \in {\cal F}^c_E \, , \\
&(\partial_{{\bf n}} v)|_F \in \Pol_{k-1}(F) \ \forall \ F \in {\cal F}^n_E  \Big\} \, ,
\end{aligned}
\end{equation}
where $\partial_{{\bf n}}$ denotes the outward normal derivative to the face.

The above space is well defined, being associated to a Poisson problem on the domain $E$ with Dirichlet (on $\Gamma^c_E$) and Neumann (on $\Gamma^n_E$) boundary conditions. Note that, on the faces of ${\cal F}^c_E$ the definition follows that of classical nodal VEM \cite{BeiraodaVeiga-Brezzi-Marini:2013,beirao2016virtual}, while on the faces in ${\cal F}^n_E$ the definition corresponds to the non-conforming VEM (\cite{AyusodeDios-Lipnikov-Manzini:2016,butterflypaper}). In particular, if all faces are in the first set we recover the classical conforming VEM, while if all faces are in the second set we recover the non-conforming VEM. The flexibility of VEM allows also to develop this ``hybrid'' element.

We introduce the following set of degrees of freedom for $W^k_h(E)$, where all the involved polynomial basis functions are assumed to have unitary $L^\infty$ norm on the element/face/edge.

\smallskip\noindent
{\bf Degrees of freedom} ($v_h \in W_h^k(E)$):
\begin{itemize}
\item pointwise evaluation $v_h(\nu)$ at all vertexes $\nu$ of $E$ which lay in $\Gamma^c_E$;
\vspace{-2mm}
\item pointwise evaluation of $v_h$ at $k-2$ distinct points for each edge $e$ of $E$ which lays in $\Gamma^c_E$;
\vspace{-2mm}
\item moments $\frac{1}{|F|} \int_F v_h \, \varphi_i$, $i=1,..,M$ with $\{ \varphi_i\}_{i=1}^M$ basis of $\Pol_{k-2}(F)$ for all $F \in {\cal F}^c_E$ ;
\vspace{-2mm}
\item moments $\frac{1}{|F|} \int_F v_h \, \varphi_i$, $i=1,..,K$ with $\{ \varphi_i\}_{i=1}^K$ basis of $\Pol_{k-1}(F)$ for all $F \in {\cal F}^n_E$ ;
\vspace{-2mm}
\item moments $\frac{1}{|E|} \int_E v_h \, \varphi_i$, $i=1,..,L$ with $\{ \varphi_i\}_{i=1}^L$ basis of $\Pol_{k-2}(E)$.
\end{itemize}

\begin{proof}
We consider the case in which ${\cal F}^c_E$ is non-void, the other case being the standard non-conforming VEM \cite{AyusodeDios-Lipnikov-Manzini:2016}. We start observing that, due to the well-posedness of the problem defining $V_h^k(E)$, the dimension of $V_h^k(E)$ corresponds to the total dimension of the (volume and boundary) data; an easy count shows that the dimension of such space corresponds to the total number of DoFs here above.
Let now $v_h \in W_h^k(E)$ and assume that all degrees of freedom associated with $v_h$ vanish.
For any face $F \in {\cal F}^c_E$, the restriction $v_h|_F$ belongs to the nodal conforming virtual element space $W_h^k(F)$ defined in \eqref{eq:vem-space-face}. The degrees of freedom defined on $\Gamma^c_E$ constitute a unisolvent set for $W_h^k(F)$ \cite{BeiradaVeiga-Brezzi-Cangiani-Manzini-Marini-Russo:2013}. Since these degrees of freedom vanish, we conclude that $v_h|_F \equiv 0$ for all $F \in {\cal F}^c_E$. Consequently, $v_h \equiv 0$ on $\Gamma^c_E$.
Thanks to integration by parts, and the fact that $v_h$ is zero on the conforming faces, we have that
\[
|\nabla v_h|^2_{1,E} = \int_E \nabla v_h \cdot \nabla v_h \, {\rm d}E = - \int_E v_h \, \Delta v_h \, {\rm d}E + \sum_{F \in {\cal F}_E^n} \int v_h \, (\partial_{\bf n} v_h) {\rm d} s \, .
\]
The first integral is zero since the moments up to degree $k-2$ on $E$ are zero.
Regarding the boundary integral, we recall that, for any $F \in {\cal F}_E^n$, it holds 
$\partial_{\bf n} v_h \in \Pol_{k-1}(F)$ and the integral of $v_h$ against polynomials of degree up to degree $k-1$ vanishes. 
Hence, the boundary integral is zero and we have
\[
| \nabla v_h|^2_{1,E} = 0 \, ,  
\]
yielding that the function is constant. Since $v_h$ is zero on the conforming faces, we conclude that $v_h \equiv 0$. 
\end{proof}
Similarly to the 2D case on faces, we introduce the projection $\PiNablaE : H^1(E) \rightarrow \Pol_k(E)$: 
\begin{equation}
\left\{
\begin{aligned}
&\int_E \nabla (I - \PiNablaE) v \cdot \nabla p_k \,{\rm d}E = 0 \quad \forall p_k\in\Pol_k(E) \, , \\
&  \int_{\partial E} (I - \PiNablaE) v \,{\rm d}s   = 0 \, ,
\end{aligned}
\right .
\label{eqn:scalPiNabla}
\end{equation}
here defined for any $v \in H^1(E)$.

On the space $W_h^k(E)$, the projection $\PiNablaE$ is explicitly computable from the DoF values. Indeed, we only need to distinguish between conforming and nonconforming faces; we have that
\begin{equation}\label{eq:squirtle}
\begin{aligned}
\int_E \nabla v_h \cdot \nabla p_k \, {\rm d}E 
&= 
-\int_E v_h \, \Delta p_k \, {\rm d}E
+ 
\int_{\partial E} (\partial_\mathbf{n} p_k) v_h \, {\rm d}s
\\
&
= -\int_E v_h \, \Delta p_k \, {\rm d}E
+ 
\sum_{F \in {\cal F}_E^c}
\int_{F} (\partial_\mathbf{n} p_k) \PiZeroF v_h \, {\rm d}F
+ 
\sum_{F \in {\cal F}_E^{nc}}
\int_{\partial F} (\partial_\mathbf{n} p_k) \Pi^{0,F}_{k-1} v_h \, {\rm d}s \, .
\end{aligned}
\end{equation}
We observe that the projection $\PiZeroF$ on the conforming faces is computable due to Remark \ref{mimikyu} and the projection $\Pi^{0,F}_{k-1}$ on the nonconforming faces is computable using the fourth group of DoFs in the above list.
By the same arguments, we can show that also the operator ``projection $\Pi^{0,E}_{k-1}$ of the gradient'', defined as
\begin{equation}\label{eq:pizerograd}
\int_E (I - \Pi^{0,E}_{k-1})\nabla v \cdot \mathbf p_k \, {\rm d}E 
= 
0 \qquad \forall \mathbf p_k \in [\mathbb{P}_{k-1}(E)]^{3} \quad \forall v \in H^1(E) \, ,
\end{equation}
is computable (through the DoF values) for functions in $W_h^k(E)$.
We introduce the enhanced version of the C-NC VEM space by employing the standard technique:
\begin{equation}\label{pokeball}
\begin{aligned}
V_h^k(E) := \Big\{ v_h \in H^1(E)\, & :\, \Delta v_h \in \Pol_{k}(E) \, , \, 
v_h|_{\Gamma^{c}_E} \in C^0(\Gamma^{c}_E) \, , \\
& \int_E (v_h-\PiNablaE v_h) p_k \,{\rm d}E = 0 \ \forall \ p_k \in \Pol_k/\Pol_{k-2}(E) \, , \\
&v_h|_F \in V_h^k(F) \ \forall \ F \in {\cal F}^c_E \, , \,
(\partial_{{\bf n}} v_h)|_F \in \Pol_{k-1}(F) \ \forall \ F \in {\cal F}^n_E  \Big\} .
\end{aligned}
\end{equation}
By standard VEM arguments, it can be shown that (1) the DoFs of the original space remain valid also for $V_h^k(E)$ and (2) the enhanced space $V_h^k(E)$ allows for the computation of the $L^2$ orthogonal projection on polynomials of degree $k$:
\begin{equation}
\int_E (I - \PiZeroE) v_h \, p_k \, {\rm d} E = 0 \quad \forall p_k \in \mathbb{P}_k(E) \, .
\label{eqn:scalPiZero}
\end{equation}

Finally, the {\bf global conforming-nonconforming VEM space}  $V_h^k(\Omega_h)$ will be built as follows. Given the polyhedral mesh $\Omega_h$, we split the faces into conforming (set ${\cal F}^c$) and non-conforming (set ${\cal F}^n$) faces. Such subdivision will automatically induce, for each element $E \in \Omega_h$, the sets ${\cal F}_E^c$ and ${\cal F}_E^n$, and such information is sufficient to define $V_h^k(E)$, see \eqref{pokeball}. All DoF values on faces that are in common to two elements are, as usual, assumed to be single-valued. The final space will be defined, following the standard Finite Element assembling procedure, as the space of functions that
(1) when restricted to each $E \in \Omega_h$ are in $V_h^k(E)$ and 
(2) which glue, across faces and edges, accordingly to the common DoFs. 
Note that the ensuing functions are locally in $H^1$ and glue continuosly (only) across faces in ${\cal F}_E^c$.
Such construction also naturally defines the global DoFs.

\begin{remark}
Although we have presented the C-NC VEM space in three dimensions, the definition can be trivially adapted to the 2D case. Let $E \in \Omega_h$ be a polygon and let us split its boundary edges into a conforming set ${\cal E}_E^c$ and a non-conforming set ${\cal E}_E^n$. We denote by $\Gamma^c_E$ the union of the edges in ${\cal E}_E^c$. The two-dimensional space is defined as
\begin{equation}
\begin{aligned}
W_h^k(E) := \Big\{ v_h \in H^1(E) \, & :\, \Delta v_h \in \Pol_{k-2}(E) \, , \, 
v_h|_{\Gamma^{c}_E} \in C^0(\Gamma^{c}_E) \, , \\
&v_h|_e \in \Pol_k(e) \ \forall \ e \in {\cal E}^c_E \, , \,
(\partial_{{\bf n}} v_h)|_e \in \Pol_{k-1}(e) \ \forall \ e \in {\cal E}^n_E \Big\} \, ;
\end{aligned}
\end{equation}
also the enhanced version $V_h^k(E)$ can be constructed, analogously to the 3D case. 
In this setting, the boundary conditions on the conforming edges simplify to standard polynomial continuity, recovering the classic VEM approach on $\Gamma^c_E$ and the non-conforming VEM approach on the remaining boundary.
\end{remark}

\begin{remark}
We note that on conforming faces $F \in {\cal F}^c$ one could also introduce the Serendipity VEM procedure to strongly reduce the number of associated face DoFs, see for instance \cite{sere1,sere2}.
\end{remark}

\section{Application to linear elasticity}\label{sec:2}

In this section, we apply the C-NC VEM to a model problem. Specifically, we focus on the linear elasticity model problem.

\subsection{The continuous problem}

Let $\Omega$ be an elastic polyhedral domain, with boundary $\partial \Omega$ partitioned into two disjoint sets $\Gamma_N, \Gamma_D$, the latter with positive measure.
We consider the following model problem
\begin{equation}\label{eq:problemacontinuo}
\begin{cases}
-\nabla \cdot \boldsymbol{\sigma}(\mathbf{u}) = \mathbf{f} & \text{in } \Omega, \\
\mathbf{u} = \mathbf{g} & \text{on } \Gamma_D, \\
\boldsymbol{\sigma}(\mathbf{u}) \mathbf{n} = \mathbf{h} & \text{on } \Gamma_N,
\end{cases}
\end{equation}
where $\mathbf{u}$ is the displacement, the stress tensor $\boldsymbol{\sigma}$ is defined as
\[
\boldsymbol{\sigma}(\mathbf{u}) = 2\mu \boldsymbol{\varepsilon}(\mathbf{u}) + \lambda \text{tr}(\boldsymbol{\varepsilon}(\mathbf{u})) \mathbf{I} \, ,
\]
while $\lambda $ and $\mu$ are the Lam\'e coefficients, and  $\boldsymbol{\varepsilon}(\mathbf{u}) = \frac{1}{2} \left( \nabla \mathbf{u} + \nabla \mathbf{u}^T \right)$ denotes the symmetric part of the gradient. 
Let $\mathbf{V}_\mathbf{g} \coloneqq \{\mathbf{v}\in [H^1(\Omega)]^3$ s.t. $\mathbf{u} = \mathbf g$ on $\Gamma_D$\}.
The variational problem reads
\begin{equation}
\text{find } \mathbf{u} \in \mathbf{V}_\mathbf{g} \text{ such that: }
a(\mathbf{u}, \mathbf{v}) = F(\mathbf{v}) \quad \forall \mathbf{v} \in \mathbf{V}_\mathbf{0} \, ,
\end{equation}
where the bilinear form $a(\cdot, \cdot)$ and the linear functional $F(\cdot)$ are defined as:
$$
a(\mathbf{u}, \mathbf{v}) = \int_{\Omega} \boldsymbol{\sigma}(\mathbf{u}) : \boldsymbol{\varepsilon}(\mathbf{v}) \, {\rm d} \Omega \, , \qquad
F(\mathbf{v}) = \int_{\Omega} \mathbf{f} \cdot \mathbf{v} \, {\rm d} \Omega + \int_{\Gamma_N} \mathbf{h} \cdot \mathbf{v} \, {\rm d} \Gamma \, .
$$
The well-posedness of this problem relies on the Korn's inequality
\begin{equation}
\|\boldsymbol{\varepsilon}(\mathbf{v})\|_{0,\Omega} \geq C_K |\mathbf{v}|_{1,\Omega} \quad \forall \mathbf{v} \in \mathbf{V}_\mathbf{0} \, ,
\end{equation}
and the Lax-Milgram lemma.

\subsection{The discrete problem}\label{sec:elast:discr}
We define the global virtual space 
\[
\mathbf{V}^k_h(\Omega_h) \coloneqq 
[V_h^k(\Omega_h)]^3 \, ,
\]
where the above scalar space has been introduced in Section \ref{sec:NC-space}.
Before defining the discrete forms, we observe that the forms introduced in the previous section can be decomposed in local contributions
\[
a(\mathbf{u}, \mathbf{v})
\eqqcolon \sum_{E \in \Omega_h}
a^E(\mathbf{u}, \mathbf{v}) \, ,
\qquad
F(\mathbf{v})
\eqqcolon \sum_{E \in \Omega_h}
F^E(\mathbf{v}) \, .
\]
As is standard in the VEM framework, an explicit expression for the virtual functions is unavailable. Consequently, these functions are typically replaced by a suitable polynomial projection. Since the projection maps onto a lower-dimensional space, the resulting matrix is rank-deficient. To restore the correct rank, a stabilization bilinear form is introduced.
For each element $E$ we define the local bilinear form $a_h^E(\cdot,\cdot)$ as
\begin{equation}\label{eq:ahe}
a_h^E(\mathbf{u}_h, \mathbf{v}_h)
\coloneqq
\int_E
\bigl(
2\mu \mathbf{\Pi}^{0,E}_{k-1}\boldsymbol{\varepsilon}(\mathbf{u}_h) + \lambda \text{tr}(\mathbf{\Pi}^{0,E}_{k-1}\boldsymbol{\varepsilon}(\mathbf{u}_h)) \mathbf{I}
\bigr)
: \mathbf{\Pi}^{0,E}_{k-1}\boldsymbol{\varepsilon}(\mathbf{v}_h) \, {\rm d }E
+
S_h^E\bigl(
(I - \PiZeroE) \mathbf{u}_h, \,
(I - \PiZeroE) \mathbf{v}_h
\bigr) ,
\end{equation}
where the above symbols $\mathbf{\Pi}^{0,E}_{k-1}$ and $\PiZeroE$ denote standard $L^2(E)$ projections on $[\mathbb{P}_{k-1}(E)]^{3 \times 3}$ and $[\mathbb{P}_{k}(E)]^{3}$, respectively.
Furthermore, above we adopt the standard stabilization 
\begin{equation}\label{eq:dofidofi}
S_h^E\bigl(\mathbf{v}_h , \, \mathbf{w}_h \bigr) := \mu \, 
h_E\sum_{i=1}^{\#\text{dof}} \text{dof}^{\,i}_E( \mathbf{v}_h) \, \text{dof}^{\,i}_E( \mathbf{w}_h) \qquad
\forall \mathbf{v}_h, \mathbf{w}_h \in \mathbf{V}^k_h(E) ,
\end{equation}
with the sum including all the Degrees of Freedom for the local space $\mathbf{V}^k_h(E) \coloneqq [V_h^k(E)]^3$, see Section 
\ref{sec:NC-space}.

The local linear form is discretized as
\begin{equation}\label{eq:FhE}
F_h^E(\mathbf{v}_h) = \int_{\Omega} \mathbf{f} \cdot \PiZeroE\mathbf{v}_h \, {\rm d} \Omega + \int_{\Gamma_N} \mathbf{h} \cdot \PiZeroE\mathbf{v}_h \, {\rm d} \Gamma \, .
\end{equation}
The global forms are obtained by summing the local contributions
\[
a_h(\mathbf{u}_h, \mathbf{v}_h)
\coloneqq \sum_{E \in \Omega_h}
a_h^E(\mathbf{u}_h, \mathbf{v}_h) \, ,
\qquad
F_h(\mathbf{v}_h)
\coloneqq \sum_{E \in \Omega_h}
F_h^E(\mathbf{u}_h) \, .
\]
We define the discrete problem as
\begin{equation}\label{eq:prob-disc}
\left\{
\begin{aligned}
&\text{find $\mathbf{u}_h \in \mathbf{V}^k_h(\Omega_h)$ such that:} \\
&a_h(\mathbf{u}_h, \mathbf{v}_h) = F_h(\mathbf{v}_h)\, , \quad \forall \mathbf{v}_h  \in \mathbf{V}^k_h(\Omega_h) \, .
\end{aligned}
\right.
\end{equation}

\begin{remark}
The proposed C-NC framework can be straightforwardly extended to more general constitutive models within the small deformation regime.
Therefore, general linear or nonlinear material laws can be handled by combining the C-NC space construction with the specific VEM discretization techniques proposed, for instance, in \cite{BLM2015}.
\end{remark}

\subsection{Theoretical results}

In this section, we prove stability and interpolation estimates for the proposed method; once these two properties are available, deriving convergence error estimates follows from standard arguments in the literature in the spirit of \cite{BeiraodaVeiga-Brezzi-Marini:2013}.

We start by showing the equivalence between the $H^1$-seminorm and the \texttt{dofi-dofi} stabilization, see definition \eqref{eq:dofidofi}, for virtual functions. The estimate that we want to prove for all $E \in \Omega_h$ is 
$$
h_E \sum_{i=1}^{\#\text{dof}} \text{dof}^{\,i}_E( {\bf v}_h) \, \text{dof}^{\,i}_E( {\bf v}_h)
\lesssim
| {\bf v}_h |_{1,E}
\lesssim
h_E \sum_{i=1}^{\#\text{dof}} \text{dof}^{\,i}_E( {\bf v}_h) \, \text{dof}^{\,i}_E( {\bf v}_h) \, .
$$
for all ${\bf v}_h \in {\bf V}^k_h(E)$, possibly under additional suitable conditions, with hidden constants independent of the particular element $E$ in the shape-regular family.

We focus on the scalar case, as the DoFs in $\mathbf{V}_h^k(\Omega_h)$ are simply the vector-valued counterparts of those for $V_h^k(\Omega_h)$.
Since the first bound is fairly simple, we show the proof briefly.
\begin{proposition}\label{prp:beirao-inequality}
Let the mesh assumptions in Section \ref{sec:mesh-ass} hold. 
Then, for all $E \in \Omega_h$ and $v_h \in \ker(\PiNablaE)$ we have
\begin{equation}\label{eq:upper}
h_E \sum_{i=1}^{\#\text{dof}} | \mathrm{dof}^i_E (v_h)|^2 \lesssim | v_h |_{1,E}^2 \, .
\end{equation}
\end{proposition}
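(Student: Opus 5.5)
The plan is to bound each group of degrees of freedom separately by a suitably scaled $L^\infty$ or $L^2$ norm, and then convert these norms to the $H^1$ seminorm. The conversion uses a Poincaré-type inequality, which is available because $v_h \in \ker(\PiNablaE)$ forces $\int_{\partial E} v_h \,{\rm d}s = 0$ by the second condition in \eqref{eqn:scalPiNabla}.

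\emph{Moment DoFs.} Since the basis polynomials have unit $L^\infty$ norm, Cauchy--Schwarz gives for the volume moments
\[
\Big|\frac{1}{|E|}\int_E v_h\varphi_i\Big| \le |E|^{-1/2}\|v_h\|_{0,E}, \qquad\text{hence}\qquad h_E\,|\mathrm{dof}^i_E(v_h)|^2 \lesssim h_E^{-2}\|v_h\|_{0,E}^2 .
\]
For the face moments on $F \in {\cal F}^c_E \cup {\cal F}^n_E$, the same argument together with $|F| \simeq h_E^2$ (from the mesh assumptions) gives
\[
h_E\,|\mathrm{dof}^i_E(v_h)|^2 \lesssim h_E^{-1}\|v_h\|_{0,F}^2 \lesssim h_E^{-2}\|v_h\|_{0,E}^2 + |v_h|_{1,E}^2 ,
\]
where the last step is the scaled trace inequality on a star-shaped polyhedron. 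It then remains to show $\|v_h\|_{0,E} \lesssim h_E |v_h|_{1,E}$. This is the Poincaré inequality for functions with zero mean on $\partial E$. It is proved in the standard way: subtract the boundary mean, apply the trace inequality, and apply the classical Poincaré inequality on star-shaped domains. The constant depends only on $\rho$.

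\emph{Nodal DoFs.} These are the vertex and edge point values on $\Gamma^c_E$, and this step is the main obstacle, since pointwise values are not controlled by $H^1$ in 3D. Here the virtual structure must be used. On each conforming face $F$, $v_h|_F \in V_h^k(F)$, and for the 2D enhanced VEM space the known inverse-type estimate
\[
\|v_h\|_{L^\infty(F)} \lesssim h_F^{-1}\|v_h\|_{0,F} + |v_h|_{1,F}
\]
would reduce the problem to controlling $|v_h|_{1,F}$. Controlling $|v_h|_{1,F}$ by $|v_h|_{1,E}$ is a trace estimate from $H^1(E)$ into $H^1(F)$, which is false for general functions. My plan is therefore to avoid face seminorms altogether. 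By finite dimensionality and equivalence of norms on the face space (after scaling to unit size), we have
\[
\|v_h\|_{L^\infty(F)}^2 \lesssim h_F^{-2}\|v_h\|_{0,F}^2 ,
\]
which holds uniformly under shape regularity, as is customary in VEM analyses; for instance, it follows from the inverse estimate for $V_h^k(F)$ via a polynomial-boundary/harmonic-lifting argument on the edges. Granting it, we obtain
\[
h_E\,|v_h(\nu)|^2 \lesssim h_E^{-1}\|v_h\|_{0,F}^2 ,
\]
and we conclude exactly as for the face moments. If this norm equivalence cannot be made uniform, I would instead first prove an $L^\infty$ bound for the element space $V_h^k(E)$, namely $\|v_h\|_{L^\infty(E)} \lesssim h_E^{-3/2}\|v_h\|_{0,E} + h_E^{-1/2}|v_h|_{1,E}$. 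To do so, I would use the elliptic problem with mixed data and the fact that $\Delta v_h$ and the Neumann data are polynomials, and bound these polynomials by inverse estimates.

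Finally, summing all contributions (there are finitely many DoFs, uniformly bounded in number under the mesh assumptions) yields \eqref{eq:upper}.
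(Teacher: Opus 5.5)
Your proposal is correct and follows essentially the same route as the paper. For the moment DoFs you use Cauchy--Schwarz with unit-$L^\infty$ basis functions, the scaled trace inequality, and the Poincar\'e inequality that follows from $\int_{\partial E} v_h\,{\rm d}s = 0$. For the nodal DoFs on conforming faces you use the inverse estimate $|v_h(\mathbf{x})| \lesssim h_F^{-1}\|v_h\|_{0,F}$ on the 2D face space, exactly as the paper does. The only difference is that the paper does not prove this estimate but cites Lemma 4.2 of \cite{Chen-Huang:2018}. A finite-dimensionality argument alone would not give shape-uniform constants, so citing that lemma is what closes your ``granting it'' step, and your fallback $L^\infty$ bound on $V_h^k(E)$ is not needed.
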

\begin{proof}
There are three type of DoFs appearing in the left hand side sum, see the DoF list for $V^k_h(E)$ in Section \ref{sec:NC-space}. Volume moment DoFs of $v_h$ are easily bounded applying a H\"older inequality (recall that $\| \varphi_i\|_{L^\infty(E)}=1$), the scaling $|E| \simeq h_E^3$ and a scaled Poincar\'e inequality using $v_h \in \ker(\PiNablaE)$:
$$
\frac{1}{|E|} \int_E v_h \, \varphi_i \lesssim  \frac{1}{|E|^{1/2}} \| v_h \|_{0,E}
\lesssim h_E^{-1/2} |v_h|_{1,E} \, ,
$$
which shows that such DoFs satisfy \eqref{eq:upper}.
Face moment DoFs are handled analogously, also using a scaled trace inequality that allows to bound the $L^2(F)$ norm with a scaled $H^1(E)$ norm. Therefore, using again the above scaled Poincar\'e inequality, one obtains
\begin{equation}\label{koalapower}
\frac{1}{|F|} \int_F v_h \, \varphi_i \lesssim \frac{1}{|F|^{1/2}} \| v_h \|_{0,F}
\lesssim \frac{h_F^{-1/2}}{|F|^{1/2}} \| v_h \|_{0,E} + \frac{h_F^{1/2}}{|F|^{1/2}} | v_h |_{1,E} 
\lesssim h_E^{-1/2} |v_h|_{1,E} \, .
\end{equation}
Regarding the pointwise value DoFs, these are always associated to the evaluation of $v_h$ on a node ${\bf x}$ that lays on the boundary of a face $F \in {\cal F}^c_E$. Since on such faces the function $v_h$ corresponds by definition to a standard two-dimensional $H^1$-conforming VEM we can apply Lemma 4.2 in \cite{Chen-Huang:2018}, yielding 
$$
|v_h({\bf x})| \lesssim h_F^{-1} \| v_h \|_{0,F}
$$
so that also this case can be concluded as in \eqref{koalapower}.
\end{proof}

The second estimate is more involved. We start by proving some preliminary lemmas. 
\begin{lemma}\label{lem:w:prop}
Let $v_h \in V^k_h(E)$ and $w \in H^1(E)$ be such that:
\begin{itemize}
    \item $w = v_h$ on $F$, for all $F \in \mathcal{F}^c_E$;
    \item $\Pi^{0,F}_{k-1}(w - v_h) = 0$ for all $F \in \mathcal{F}^{nc}_E$;
    \item $\PiZeroE(w -v_h) = 0$.
\end{itemize}
Then, the following estimate holds:
\[
| v_h |_{1,E} \leq | w |_{1,E} \, .
\]
\end{lemma}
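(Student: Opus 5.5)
The approach is to show that $v_h$ is the $H^1$-seminorm minimizer among all functions sharing its ``data'', via a Galerkin-type orthogonality. Write $w = v_h + \delta$ with $\delta := w - v_h \in H^1(E)$. Then
\[
|w|_{1,E}^2 = |v_h|_{1,E}^2 + 2\int_E \nabla v_h\cdot\nabla\delta \,{\rm d}E + |\delta|_{1,E}^2 ,
\]
so it suffices to prove that $\int_E \nabla v_h\cdot\nabla \delta\,{\rm d}E = 0$. The conclusion $|v_h|_{1,E}\le|w|_{1,E}$ then follows immediately, since $|\delta|_{1,E}^2\ge 0$.

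To compute the cross term I integrate by parts, using that $v_h\in V_h^k(E)$ satisfies $\Delta v_h\in\Pol_k(E)$. This gives
\[
\int_E \nabla v_h\cdot\nabla\delta\,{\rm d}E = -\int_E \Delta v_h\,\delta\,{\rm d}E + \sum_{F\in\mathcal F^c_E}\int_F \partial_{\bf n} v_h\,\delta\,{\rm d}F + \sum_{F\in\mathcal F^{nc}_E}\int_F \partial_{\bf n} v_h\,\delta\,{\rm d}F .
\]
Each term vanishes by one of the three hypotheses:
\begin{itemize}
\item The volume term vanishes because $\Delta v_h\in\Pol_k(E)$ and $\PiZeroE\delta=0$.
\item The conforming-face terms vanish because $\delta=0$ on every $F\in\mathcal F^c_E$.
\item On nonconforming faces, $\partial_{\bf n}v_h|_F\in\Pol_{k-1}(F)$ by the definition \eqref{pokeball}, while $\Pi^{0,F}_{k-1}\delta=0$, so these integrals vanish too.
\end{itemize}
Hence the cross term is zero.

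The only delicate point is justifying the integration by parts, since $\partial_{\bf n}v_h$ on conforming faces is a priori only in $H^{-1/2}(\partial E)$, and there is no face-by-face splitting in general. I would avoid this issue by not splitting on $\Gamma^c_E$. Since $\Delta v_h\in L^2(E)$, Green's formula holds in the duality $\langle\partial_{\bf n}v_h,\delta\rangle_{H^{-1/2}(\partial E),H^{1/2}(\partial E)}$. Moreover $\delta$ has trace vanishing on $\Gamma^c_E$, and on $\Gamma^n_E$ the normal derivative is piecewise polynomial, hence in $L^2$. So the duality pairing reduces to the sum of $L^2$ integrals over the nonconforming faces, which vanish as above.
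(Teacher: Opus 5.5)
Your proof is correct and follows the paper's argument: you integrate the cross term $\int_E \nabla v_h\cdot\nabla(w-v_h)\,{\rm d}E$ by parts, kill the volume, conforming-face and nonconforming-face contributions with the three hypotheses respectively, and conclude from $|w|_{1,E}^2=|w-v_h|_{1,E}^2+|v_h|_{1,E}^2$. Your closing remark, which reads the boundary term as an $H^{-1/2}$--$H^{1/2}$ duality so that no face-by-face splitting on $\Gamma^c_E$ is needed, is a legitimate refinement of a step the paper glosses over.
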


\begin{proof}
By applying integration by parts, we have
\[
\int_E \nabla v_h \cdot \nabla (w - v_h) \, {\rm d} E = 
-\int_E (w - v_h) \Delta v_h \, {\rm d} E + \sum_{F \in \mathcal{F}^c_E} \int_F (w - v_h) \partial_\mathbf{n} v_h \, {\rm d}F + \sum_{F \in \mathcal{F}^{nc}_E} \int_F (w - v_h) \partial_\mathbf{n} v_h \, {\rm d}F.
\]
The three terms in the right hand side are all equal to zero.
Indeed, the first integral vanishes because $\Delta v_h \in \mathbb{P}_{k}(E)$ and, by hypothesis $\Pi_{k}^{0,E}(v_h - w) = 0$.
On the conforming faces $F \in \mathcal{F}^c_E$, the functions have the same trace, thus also the second integral is identically zero. On the nonconforming faces $F \in \mathcal{F}^{nc}_E$, we exploit the fact that the normal derivative $\partial_\mathbf{n} v_h$ is a polynomial of degree $k-1$ on $F$ combined with the second item in the above hypotheses.
Consequently, since the above integral vanishews, the following orthogonal decomposition holds:
\[
| w |^2_{1,E} = | w - v_h |^2_{1,E} + | v_h |^2_{1,E},
\]
yielding the desired estimate $| v_h |_{1,E} \leq | w |_{1,E}$.
\end{proof}

\begin{lemma}\label{lm:PiNabla_stability}
Let the mesh assumptions in Section \ref{sec:mesh-ass} hold. 
Then, the following estimate holds for all $E \in \Omega_h$ and $v_h \in V_h^k(E)$:
\begin{equation}\label{eq:H1_estimate}
| \PiNablaE v_h |_{1,E} + h_E^{-1} \| \PiNablaE v_h \|_{0,E}  
\lesssim
h_E^{-1} \| \Pi^{0,E}_{k-2} v_h \|_{0,E}
+ \sum_{F \in \mathcal{F}^{c}_E} h_E^{-1/2} \| v_h \|_{0,F}
+ \sum_{F \in \mathcal{F}^{nc}_E} h_E^{-1/2} \| \Pi^{0,F}_{k-1} v_h \|_{0,F} \, .
\end{equation}
\end{lemma}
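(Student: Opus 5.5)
The plan is to bound $|\PiNablaE v_h|_{1,E}$ by testing the definition \eqref{eqn:scalPiNabla} with $p_k=\PiNablaE v_h$, and then to recover the $L^2$ term from the mean-value condition.

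\textbf{Step 1: the gradient bound.} Set $p:=\PiNablaE v_h$. By \eqref{eqn:scalPiNabla} and the integration by parts \eqref{eq:squirtle},
\[
|p|_{1,E}^2=\int_E\nabla v_h\cdot\nabla p\,{\rm d}E
=-\int_E v_h\,\Delta p\,{\rm d}E+\sum_{F\in\mathcal F^c_E}\int_F v_h\,\partial_{\mathbf n}p\,{\rm d}F+\sum_{F\in\mathcal F^{nc}_E}\int_F \Pi^{0,F}_{k-1}v_h\,\partial_{\mathbf n}p\,{\rm d}F .
\]
Since $\Delta p\in\Pol_{k-2}(E)$, the first term equals $-\int_E \Pi^{0,E}_{k-2}v_h\,\Delta p$. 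On nonconforming faces we used $\partial_{\mathbf n}p|_F\in\Pol_{k-1}(F)$, since $F$ is planar. Cauchy--Schwarz then gives
\[
|p|_{1,E}^2\le \|\Pi^{0,E}_{k-2}v_h\|_{0,E}\|\Delta p\|_{0,E}+\sum_{F\in\mathcal F^c_E}\|v_h\|_{0,F}\|\nabla p\|_{0,F}+\sum_{F\in\mathcal F^{nc}_E}\|\Pi^{0,F}_{k-1}v_h\|_{0,F}\|\nabla p\|_{0,F}.
\]
We now apply polynomial inverse estimates on shape-regular elements (star-shaped with respect to a ball of radius $\rho h_E$): $\|\Delta p\|_{0,E}\lesssim h_E^{-1}|p|_{1,E}$, and $\|\nabla p\|_{0,F}\lesssim h_E^{-1/2}\|\nabla p\|_{0,E}$. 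The latter is a polynomial trace/inverse inequality, using $h_F\simeq h_E$. Dividing by $|p|_{1,E}$ yields the claimed bound for $|p|_{1,E}$.

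\textbf{Step 2: the $L^2$ term.} Write $p=(p-\bar p)+\bar p$, where $\bar p:=|\partial E|^{-1}\int_{\partial E}p$. A scaled Poincar\'e inequality for the boundary mean gives $h_E^{-1}\|p-\bar p\|_{0,E}\lesssim|p|_{1,E}$, which is already controlled. For the constant, the second condition in \eqref{eqn:scalPiNabla} gives $\bar p=|\partial E|^{-1}\int_{\partial E}v_h$, and $|\partial E|\simeq h_E^2$. On conforming faces, $|\int_F v_h|\le |F|^{1/2}\|v_h\|_{0,F}$. On nonconforming faces $k\ge1$, so $\int_F v_h=\int_F\Pi^{0,F}_{k-1}v_h$, bounded the same way. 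Therefore
\[
h_E^{-1}\|\bar p\|_{0,E}\lesssim h_E^{-1}h_E^{3/2}h_E^{-2}h_E\Big(\sum_{F\in\mathcal F^c_E}\|v_h\|_{0,F}+\sum_{F\in\mathcal F^{nc}_E}\|\Pi^{0,F}_{k-1}v_h\|_{0,F}\Big),
\]
which matches the right-hand side of \eqref{eq:H1_estimate} with the factor $h_E^{-1/2}$.

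\textbf{Main obstacle.} The delicate point is justifying the inverse and trace inequalities for polynomials uniformly over general polyhedra. My approach is to use the ball $B$ of radius $\rho h_E$ and the enclosing ball of radius $h_E$, together with equivalence of norms on $\Pol_k$ over these balls. The face trace bound $\|q\|_{0,F}\lesssim h_E^{-1/2}\|q\|_{0,E}$ for $q\in\Pol_{k-1}(E)$ follows similarly: bound $\|q\|_{L^\infty(E)}\lesssim h_E^{-3/2}\|q\|_{0,B}$ and use $|F|\lesssim h_E^2$.
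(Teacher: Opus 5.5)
Your proposal is correct and follows essentially the same route as the paper: you test the definition \eqref{eqn:scalPiNabla} with $\PiNablaE v_h$ itself, integrate by parts, replace $v_h$ by $\Pi^{0,E}_{k-2}v_h$ in the volume term and by $\Pi^{0,F}_{k-1}v_h$ on the nonconforming faces, apply polynomial inverse and trace estimates, and then recover the $L^2$ term from a scaled Poincar\'e inequality with the boundary mean fixed by the second condition in \eqref{eqn:scalPiNabla}. Your scaling $|\int_F v_h|\le |F|^{1/2}\|v_h\|_{0,F}\lesssim h_E\|v_h\|_{0,F}$ is the dimensionally correct form of the paper's bound \eqref{eq:PiNablaEst-2}. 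The paper writes that factor as $h_E^{1/2}$, which is a scaling slip that would not produce the $h_E^{-1/2}$ in the final estimate.
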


\begin{proof}
By exploiting integration by parts, we obtain:
\[
\begin{aligned}
\int_E |\nabla \PiNablaE v_h|^2 \, {\rm d} E
&= \int_E \nabla \PiNablaE v_h \cdot \nabla v_h \, {\rm d} E \\
&= -\int_E v_h \Delta \PiNablaE v_h \, {\rm d} E \\
& \quad + \sum_{F \in \mathcal{F}^c_E} \int_F v_h \bigl(\partial_\mathbf{n} \PiNablaE v_h \bigr) \, {\rm d}F
+ \sum_{F \in \mathcal{F}^{nc}_E} \int_F v_h \bigl(\partial_\mathbf{n} \PiNablaE v_h \bigr) \, {\rm d}F \\
&= -\int_E \Pi^{0,E}_{k-2} v_h \Delta \PiNablaE v_h \, {\rm d} E \\
& \quad + \sum_{F \in \mathcal{F}^c_E} \int_F v_h \bigl(\partial_\mathbf{n} \PiNablaE v_h \bigr) \, {\rm d}F
+ \sum_{F \in \mathcal{F}^{nc}_E} \int_F \Pi^{0,F}_{k-1} v_h \bigl(\partial_\mathbf{n} \PiNablaE v_h \bigr) \, {\rm d}F \, .
\end{aligned}
\]
Using polynomial inverse inequalities, trace inequalities, and the Cauchy-Schwarz inequality, we deduce:
\begin{equation}\label{eq:PiNablaEst1}
| \PiNablaE v_h |^2_{1,E}
\lesssim
\left(
h_E^{-1} \| \Pi^{0,E}_{k-2} v_h \|_{0,E}
+ \sum_{F \in \mathcal{F}^{c}_E} h_E^{-1/2} \| v_h \|_{0,F}
+ \sum_{F \in \mathcal{F}^{nc}_E} h_E^{-1/2} \| \Pi^{0,F}_{k-1} v_h \|_{0,F}
\right)
| \PiNablaE v_h |_{1,E}.
\end{equation}
We also observe that:
\begin{equation}\label{eq:PiNablaEst-2}
\left| \int_{\partial E} \PiNablaE v_h \, {\rm d}s \right|
= \left| \sum_{F \subset \partial E} \int_{F} \Pi^{0,F}_0 v_h \, {\rm d}F \right|
\lesssim \sum_{F \in \mathcal{F}^{c}_E} h_E^{1/2} \| v_h \|_{0,F}
+ \sum_{F \in \mathcal{F}^{nc}_E} h_E^{1/2} \| \Pi^{0,F}_{k-1} v_h \|_{0,F}.
\end{equation}
Combining \eqref{eq:PiNablaEst1} and \eqref{eq:PiNablaEst-2} a scaled Poincar\'e-Friedrichs inequality, we easily obtain the desired estimate.
\end{proof}
Arguing as Lemma 2.18 in \cite{Brenner2017}, we immediately derive the following corollary
\begin{corollary}\label{cor:easy}
Let the mesh assumptions in Section \ref{sec:mesh-ass} hold. 
Then, the following estimate holds for all $E \in \Omega_h$ and $v_h \in V_h^k(E)$:
\begin{equation}
\| \PiZeroE v_h \|_{0,E}
\lesssim
\| \Pi^{0,E}_{k-2} v_h \|_{0,E}
+ \sum_{F \in \mathcal{F}^{c}_E} h_E^{1/2} \| v_h \|_{0,F}
+ \sum_{F \in \mathcal{F}^{nc}_E} h_E^{1/2} \| \Pi^{0,F}_{k-1} v_h \|_{0,F} \, .
\end{equation}
\end{corollary}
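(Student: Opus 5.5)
We follow the argument of Lemma 2.18 in \cite{Brenner2017}, which rests on the enhancement condition in the definition \eqref{pokeball} of $V_h^k(E)$ together with Lemma \ref{lm:PiNabla_stability}. Denote by $R$ the right-hand side of the Corollary. The starting point is the decomposition
\[
\PiZeroE v_h = \Pi^{0,E}_{k-2} v_h + (\PiZeroE - \Pi^{0,E}_{k-2}) v_h .
\]
The first term is already part of $R$, so it remains to bound the $L^2(E)$ norm of the component of $\PiZeroE v_h$ that is orthogonal to $\Pol_{k-2}(E)$.

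By the enhancement condition, $v_h$ and $\PiNablaE v_h$ have the same moments against $\Pol_k/\Pol_{k-2}(E)$. Hence, for every $p_k \in \Pol_k(E)$ we can split $p_k = q_{k-2} + r$, with $q_{k-2} \in \Pol_{k-2}(E)$ and $r$ in the span of the scaled monomials of degree $k-1$ and $k$. This gives
\[
\int_E \PiZeroE v_h \, p_k \,{\rm d}E
= \int_E \Pi^{0,E}_{k-2} v_h \, q_{k-2}\,{\rm d}E + \int_E \PiNablaE v_h \, r\,{\rm d}E .
\]
Choosing $p_k = \PiZeroE v_h$ and applying Cauchy--Schwarz, we obtain
\[
\| \PiZeroE v_h \|^2_{0,E} \le \bigl( \| \Pi^{0,E}_{k-2} v_h\|_{0,E} + \|\PiNablaE v_h\|_{0,E}\bigr)\bigl(\|q_{k-2}\|_{0,E} + \|r\|_{0,E}\bigr).
\]

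The key technical point is to show that the monomial splitting is $L^2$-stable, i.e.
\[
\|q_{k-2}\|_{0,E} + \|r\|_{0,E} \lesssim \|p_k\|_{0,E},
\]
with a constant uniform over the shape-regular family. To see this, map $E$ to a scaled configuration of unit diameter. On polynomials of fixed degree, the $L^2$ norm is equivalent to the Euclidean norm of the scaled-monomial coefficients, and the equivalence constants depend only on $\rho$, since $E$ contains a ball of radius $\rho h_E$ and is contained in a ball of radius $h_E$. The splitting is a coordinate truncation, so the bound follows. This is the step I expect to require the most care, but it is standard in the VEM literature.

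Dividing by $\|\PiZeroE v_h\|_{0,E}$ then yields
\[
\|\PiZeroE v_h\|_{0,E} \lesssim \|\Pi^{0,E}_{k-2} v_h\|_{0,E} + \|\PiNablaE v_h\|_{0,E}.
\]
Finally, Lemma \ref{lm:PiNabla_stability} gives
\[
h_E^{-1}\|\PiNablaE v_h\|_{0,E} \lesssim h_E^{-1} R ,
\]
and multiplying through by $h_E$ produces exactly the three terms of the Corollary. For $k=1$, $\Pol_{k-2}(E)$ is empty, the enhancement identifies $\PiZeroE v_h$ with $\PiNablaE v_h$, and the claim follows directly from Lemma \ref{lm:PiNabla_stability}.
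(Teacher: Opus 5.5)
Your proof is correct and is essentially the argument the paper intends. The paper gives no details beyond invoking the reasoning of Lemma 2.18 in \cite{Brenner2017} together with Lemma \ref{lm:PiNabla_stability}: use the enhancement condition to bound $\|\PiZeroE v_h\|_{0,E}$ by $\|\Pi^{0,E}_{k-2} v_h\|_{0,E}+\|\PiNablaE v_h\|_{0,E}$, then rescale that lemma's estimate, which is exactly what you do. Your uniform $L^2$-stability of the scaled-monomial splitting is the extra ingredient that makes the first step rigorous here. It is needed because \eqref{pokeball} imposes the enhancement against $\Pol_k/\Pol_{k-2}(E)$ rather than the $L^2$-orthogonal complement of $\Pol_{k-2}(E)$; with the orthogonal complement one would instead have the exact identity $\PiZeroE v_h=\PiNablaE v_h+\Pi^{0,E}_{k-2}(v_h-\PiNablaE v_h)$.
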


\begin{lemma}\label{lm:beiraos-lemma}
Let the mesh assumptions in Section \ref{sec:mesh-ass} hold.
Let $v_h \in V_h^k(E)$, $E \in \Omega_h$. Then it exists a function $w \in H^1(E)$ that satisfies the first and second conditions in Lemma \ref{lem:w:prop}, plus the additional property
\begin{equation}\label{eq:equiv:br}
h_E |w|_{1,E} \simeq \| w \|_{0,E} \simeq \sum_{F \in \mathcal{F}^{c}_E} h_E^{1/2} \| v_h \|_{0,F}
+ \sum_{F \in \mathcal{F}^{nc}_E} h_E^{1/2} \| \Pi^{0,F}_{k-1} v_h \|_{0,F}  \, .
\end{equation}
\end{lemma}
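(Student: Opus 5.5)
We construct $w$ explicitly by lifting boundary data into $E$ with a polynomial-free, scaling-friendly extension, and we verify the norm equivalences by scaling arguments. Fix a point $\mathbf{x}_E$ with respect to which $E$ is star-shaped, as in Section \ref{sec:mesh-ass}. We work on the reference configuration $\widehat E = (E-\mathbf{x}_E)/h_E$, which has unit diameter and satisfies the shape-regularity constants uniformly. The boundary datum we want $w$ to carry is
\[
g := v_h \ \text{on } F \in \mathcal{F}^c_E, \qquad g := \Pi^{0,F}_{k-1} v_h \ \text{on } F \in \mathcal{F}^{nc}_E .
\]
Taking $w|_F = \Pi^{0,F}_{k-1}v_h$ on nonconforming faces trivially gives the second condition of Lemma \ref{lem:w:prop}, and $w|_F=v_h$ gives the first. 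The difficulty is that $g$ is in general discontinuous across edges shared by a conforming and a nonconforming face, or by two nonconforming faces. So $g \notin H^{1/2}(\partial E)$, and a standard trace lifting is not available.

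To avoid lifting $g$ as a trace, we relax the requirement. The hypotheses of Lemma \ref{lem:w:prop} on nonconforming faces only ask for $\Pi^{0,F}_{k-1}(w-v_h)=0$, not for pointwise equality. We therefore build $w = w_1 + w_2$ as follows.
\begin{itemize}
\item $w_1$ is a lifting of $v_h|_{\Gamma^c_E}$ only. On $\Gamma^c_E$ the function $v_h$ is continuous and piecewise face-VEM, hence in $H^{1/2}(\Gamma^c_E)$. We extend it (for instance by zero-mean-free harmonic or radial extension) so that $\|w_1\|_{0,E}+h_E|w_1|_{1,E}\lesssim h_E^{1/2}\|v_h\|_{0,\Gamma^c_E}$.
\item $w_2$ is a sum of face bubbles $w_2=\sum_{F\in\mathcal{F}^{nc}_E} b_F q_F$. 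Here $b_F\in H^1_0$ of a pyramid/subtetrahedron region adjacent to $F$ vanishes on all other faces. The polynomials $q_F\in\Pol_{k-1}(F)$, extended constantly in the normal direction, are chosen so that $\Pi^{0,F}_{k-1}(w_1+b_Fq_F)=\Pi^{0,F}_{k-1}v_h$.
\end{itemize}
Solvability of the second item is the standard bubble argument: $q\mapsto \Pi^{0,F}_{k-1}(b_F q)$ is an isomorphism on $\Pol_{k-1}(F)$ since $b_F>0$ in the interior of $F$. By scaling on $\widehat E$ the inverse is uniformly bounded, using the star-shapedness of $F$ and $h_F\simeq h_E$. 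Hence
\[
\|q_F\|_{0,F}\lesssim \|\Pi^{0,F}_{k-1}v_h\|_{0,F}+\|w_1\|_{0,F}.
\]

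The main obstacle is getting the lower bounds in \eqref{eq:equiv:br}, not just the upper ones. The upper bounds $\|w\|_{0,E}+h_E|w|_{1,E}\lesssim \text{RHS}$ follow from the construction and scaling, with $\|w_1\|_{0,F}\lesssim h_E^{-1/2}\|w_1\|_{0,E}+h_E^{1/2}|w_1|_{1,E}$. The lower bounds need a separate argument.
\begin{itemize}
\item \emph{RHS $\lesssim \|w\|_{0,E}$.} The trace inequality only gives RHS $\lesssim h_E^{1/2}\cdot h_E^{-1/2}\|w\|_{0,E}+h_E|w|_{1,E}$. We still need the inverse-type bound $h_E|w|_{1,E}\lesssim\|w\|_{0,E}$, which does not hold for arbitrary $H^1$ functions.
\item \emph{Inverse-type bound.} We plan to obtain it by making $w$ live in a finite-dimensional space. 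On conforming faces $v_h$ lies in the finite-dimensional face space $V^k_h(F)$. So we define $w_1$ through a fixed linear extension operator acting on the finite-dimensional space $\prod_{F\in\mathcal{F}^c_E}V_h^k(F)$ (glued continuously). The map from boundary data to $w$ is then linear and injective on a finite-dimensional space, and all three quantities in \eqref{eq:equiv:br} are norms on it. Injectivity holds because the data are read off from $w$ via traces and face projections.
\item \emph{Uniformity.} Equivalence of these norms with constants independent of $E$ follows from a compactness argument over the shape-regular family of reference elements $\widehat E$. This requires continuous dependence of the extension on the geometry. Alternatively, to sidestep compactness, one can use a sub-tetrahedralization of $E$ (available from star-shapedness), take $w_1$ piecewise as a discrete-harmonic-type extension into each sub-tetrahedron, and invoke Lemma 4.2 of \cite{Chen-Huang:2018} to control face VEM functions by polynomial-like inverse estimates.
\end{itemize}
This uniform equivalence is the step I expect to require the most care. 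I would first try the sub-tetrahedralization route, since it reduces everything to polynomial or face-VEM inverse inequalities with shape-regular constants.
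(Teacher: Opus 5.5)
\textbf{Verdict: there is a genuine gap.} The failing step is the bound $\|w\|_{0,E}\lesssim h_E|w|_{1,E}$ in \eqref{eq:equiv:br}. Write $R$ for the right-hand side of \eqref{eq:equiv:br}. For this direction you rely on the claim that $h_E|\cdot|_{1,E}$, $\|\cdot\|_{0,E}$ and $R$ are all norms on the image of your extension operator. Your injectivity argument (the data can be read off from traces and face projections of $w$) only justifies this for the last two. The seminorm $|\cdot|_{1,E}$ vanishes on constants, and every extension you propose reproduces constants: harmonic, mixed harmonic, discrete-harmonic on sub-tetrahedra, and plain radial.

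Concretely, take $\mathcal{F}^{nc}_E=\emptyset$ and $v_h\equiv 1\in V_h^k(E)$. Then $w_1\equiv 1$ and $w_2=0$, so $|w|_{1,E}=0$, while $\|w\|_{0,E}\simeq h_E^{3/2}\simeq R$. This contradicts \eqref{eq:equiv:br}. The same happens with nonconforming faces if $w_1$ is harmonic with homogeneous Neumann data on $\Gamma^n_E$, since the moment conditions then force $q_F=0$. So any lifting satisfying the lemma, including the one the paper borrows from Brenner et al., must be built so as \emph{not} to preserve constants. A later finite-dimensionality or compactness argument cannot supply this. Proposition~\ref{prop:judoboy} only uses the upper bounds $\|w\|_{0,E}+h_E|w|_{1,E}\lesssim R$, so your $w$ would suffice there, but not for the lemma as stated.

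\textbf{How your route compares with the paper's.} Your bubble correction is sound and is a genuine alternative to the paper's device. The paper avoids the discontinuous datum by taking $w|_{\partial E}$ continuous with $w|_F\in V_h^{k+1}(F)$. On conforming faces it sets $w=v_h$. On nonconforming faces it uses the interior moments of $V_h^{k+1}(F)$, which are exactly the moments against $\mathbb{P}_{k-1}(F)$, to match $\Pi^{0,F}_{k-1}v_h$. It sets all remaining edge values to zero and then lifts this single continuous trace.

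\textbf{A repair along your lines.} Put your correction into the trace and use a cutoff lifting.
\begin{itemize}
\item Let $g_1$ be the continuous piecewise face-VEM extension of $v_h|_{\Gamma^c_E}$ that vanishes at all other nodes, and let $\beta_F$ be a 2D bubble on $F$.
\item Set $\tilde g:=g_1+\sum_{F\in\mathcal{F}^{nc}_E}\beta_F q_F$, with $q_F$ chosen so that $\Pi^{0,F}_{k-1}\tilde g=\Pi^{0,F}_{k-1}v_h$.
\item Define $w(\mathbf{x}_E+t(\mathbf{y}-\mathbf{x}_E)):=\chi(t)\,\tilde g(\mathbf{y})$ for $\mathbf{y}\in\partial E$ and $t\in[0,1]$. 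Here $\chi$ is smooth with $\chi=0$ on $[0,1/2]$ and $\chi(1)=1$, and $\mathbf{x}_E$ is the centre of the star-shapedness ball.
\end{itemize}
Then $w$ vanishes on $\mathbf{x}_E+\tfrac12(E-\mathbf{x}_E)$, so a Friedrichs inequality gives $\|w\|_{0,E}\lesssim h_E|w|_{1,E}$ with a constant depending only on $\rho$. The full chain
\[
R\lesssim h_E^{1/2}\|\tilde g\|_{0,\partial E}\lesssim\|w\|_{0,E}\lesssim h_E|w|_{1,E}\lesssim R
\]
then holds, step by step:
\begin{itemize}
\item The first step uses $\tilde g=v_h$ on conforming faces and $\|\Pi^{0,F}_{k-1}v_h\|_{0,F}=\|\Pi^{0,F}_{k-1}\tilde g\|_{0,F}\le\|\tilde g\|_{0,F}$.
\item The second step is the polar change of variables, using $(\mathbf{y}-\mathbf{x}_E)\cdot\mathbf{n}\ge\rho h_E$.
\item The third step is the Friedrichs inequality above.
\item The last step starts from $h_E|w|_{1,E}\lesssim h_E^{1/2}\sum_F\bigl(\|\tilde g\|_{0,F}+h_F|\tilde g|_{1,F}\bigr)$. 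Apply face inverse estimates to $g_1$ and to $\beta_Fq_F$ separately, then use your bound on $q_F$ and pointwise bounds for face-VEM functions to control the nodal values of $g_1$.
\end{itemize}
No compactness argument is needed. This repaired route is in substance the paper's proof, with $\beta_Fq_F$ in place of the $V_h^{k+1}(F)$ interior moments and an explicit non-constant-preserving lifting.
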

\begin{proof}
We start by defining $w$ on $\partial E$, which is chosen in the following set:
\begin{equation}\label{eq:boundaryspace}
w|_{\partial E} \in 
\big\{ v \in C^0(\partial E) \ : \ v|_F \in V_h^{k+1}(F) \ \forall \textrm{ face } F \in \partial E \big\} \, ,
\end{equation}
see Section \ref{sec:facevem}. We then select the specific $w$ by setting $w|_F = v_h |_F$ for all $F \in {\cal F}^c_E$, and $\Pi^{0,F}_{k-1} (w|_F) = \Pi^{0,F}_{k-1} (v_h |_F)$ for all $F \in {\cal F}^{nc}_E$. Note that the latter is an admissible condition due to the degrees of freedom of $V_h^{k+1}(F)$, c.f. Section \ref{sec:facevem}. Finally, the remaining DoFs associated to the above boundary space \eqref{eq:boundaryspace} (i.e. the nodal evaluations on edges that do not lay on $\partial F$ for some $F \in {\cal F}^c_E$) are set to zero. 

In order to lift $w|_F$ to a function living on the whole $E$, we follow the same identical procedure proposed in Lemma 5.3 of \cite{Brenner2017}. This immediately yields, with the same identical proof as in the aforementioned lemma, the norm equivalences in \eqref{eq:equiv:br}.
Since $w$ also satisfies, by construction, the first two conditions in Lemma \ref{lem:w:prop}, the proof is concluded.
\end{proof}

\begin{proposition}\label{prop:judoboy}
Let the mesh assumptions in Section \ref{sec:mesh-ass} hold.
Then  the following estimate holds for all $E \in \Omega_h$ and $v_h \in V_h^k(E)$
\[
|v_h|_{1,E} 
\lesssim h_E^{-1} \| \Pi^{0,E}_{k-2} v_h \|_{0,E}
+ \sum_{F \in \mathcal{F}^{c}_E} h_E^{-1/2} \| v_h \|_{0,F}
+ \sum_{F \in \mathcal{F}^{nc}_E} h_E^{-1/2} \| \Pi^{0,F}_{k-1} v_h \|_{0,F}
\]
\end{proposition}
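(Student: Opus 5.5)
The plan is to combine Lemma \ref{lem:w:prop} with Lemma \ref{lm:beiraos-lemma}. The latter supplies a function that matches $v_h$ on the boundary data but not necessarily in the volume moments, so I will correct it in the volume. Let $w_0 \in H^1(E)$ be the function from Lemma \ref{lm:beiraos-lemma}. It satisfies the first two conditions of Lemma \ref{lem:w:prop}, and
\[
h_E|w_0|_{1,E} + \|w_0\|_{0,E} \lesssim B := \sum_{F \in \mathcal{F}^{c}_E} h_E^{1/2} \| v_h \|_{0,F} + \sum_{F \in \mathcal{F}^{nc}_E} h_E^{1/2} \| \Pi^{0,F}_{k-1} v_h \|_{0,F} .
\]
The third condition, $\PiZeroE(w - v_h)=0$, is still missing. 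I therefore set $w = w_0 + b_E\, q$, where $b_E$ is a volume bubble vanishing on $\partial E$ and $q \in \Pol_k(E)$ is chosen so that
\[
\int_E b_E\, q\, p \,{\rm d}E = \int_E (v_h - w_0)\, p\,{\rm d}E \qquad \forall p \in \Pol_k(E).
\]
On a star-shaped polyhedron one can take a cubic bubble on a shape-regular tetrahedral sub-triangulation, or a smooth bubble supported in the inscribed ball of radius $\rho h_E$. The weighted mass matrix $\int_E b_E q p$ is then uniformly invertible after scaling, so $\|b_E q\|_{0,E} \lesssim \|\PiZeroE(v_h - w_0)\|_{0,E}$. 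Since $b_E q$ vanishes on $\partial E$, the boundary conditions are preserved. Lemma \ref{lem:w:prop} then gives $|v_h|_{1,E} \le |w|_{1,E} \le |w_0|_{1,E} + |b_E q|_{1,E}$.

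Next I bound each piece. First, $|w_0|_{1,E} \lesssim h_E^{-1} B$, which is exactly the face contribution in the claim. Second, a polynomial inverse estimate gives $|b_E q|_{1,E} \lesssim h_E^{-1}\|b_E q\|_{0,E} \lesssim h_E^{-1}\big(\|\PiZeroE v_h\|_{0,E} + \|w_0\|_{0,E}\big)$. Here $\|w_0\|_{0,E}\lesssim B$, and Corollary \ref{cor:easy} gives $\|\PiZeroE v_h\|_{0,E} \lesssim \|\Pi^{0,E}_{k-2}v_h\|_{0,E} + B$. Collecting these yields
\[
|v_h|_{1,E} \lesssim h_E^{-1}\|\Pi^{0,E}_{k-2}v_h\|_{0,E} + h_E^{-1}B ,
\]
which is the statement.

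The main obstacle is the bubble construction with uniform constants. It requires a uniform inverse inequality for $b_E q$ and uniform invertibility of the weighted mass matrix on polyhedra satisfying the mesh assumptions. I would rely on the star-shapedness of $E$ with respect to a ball of radius $\rho h_E$ and take $b_E$ supported in that ball. Then $\int b_E q^2 \simeq \|q\|^2_{0,\text{ball}} \simeq \|q\|^2_{0,E}$ by equivalence of norms on $\Pol_k$, and $|b_E q|_{1,E}\lesssim h_E^{-1}\|q\|_{0,E}$ by scaling. It remains to check that $\|b_E q\|_{0,E} \lesssim \|\PiZeroE(v_h-w_0)\|_{0,E}$, which I would verify by testing the defining relation with $p=q$.

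A lesser point is that Corollary \ref{cor:easy} is used at the right polynomial degree, $\PiZeroE$ rather than $\Pi^{0,E}_{k-2}$. This degree shift is precisely what makes the enhancement-based argument close.
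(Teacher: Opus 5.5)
Your proposal is correct and follows essentially the same route as the paper. You correct the lifting from Lemma~\ref{lm:beiraos-lemma} by a bubble times a polynomial in $\Pol_k(E)$ so that $\PiZeroE(w-v_h)=0$, apply Lemma~\ref{lem:w:prop}, bound the polynomial by testing its defining relation with itself together with Corollary~\ref{cor:easy} and Lemma~\ref{lm:beiraos-lemma}, and finish with an inverse estimate; your remarks on the bubble supported in the inscribed ball (uniform invertibility via norm equivalence on $\Pol_k$, and $\|b_E q\|_{0,E}\le\|q\|_{0,E}$) simply make explicit what the paper calls ``standard polynomial properties.''
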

\begin{proof}
Given $B$, the ball contained in $E$ associated to the shape-regularity condition in Section \ref{sec:mesh-ass},
we denote by $\phi$ a bubble function with support in $B \subset E$ such that $\| \phi \|_{L^\infty(E)} = 1$.
Then, we select a polynomial $p \in \mathbb{P}_k(E)$ such that the function defined as $z = w + \phi p$ satisfies
\[
\int_E (v_h - z) q \, {\rm d }E = 0 \, , \qquad \forall q \in \mathbb{P}_k(E) \, ,
\]
where $w$ is the function introduced in Lemma \ref{lm:beiraos-lemma}.
We note that, since the function $z$ satisfies the assumptions in Lemma \ref{lem:w:prop} by construction, it holds
\[
|v_h|_{1,E} \leq | z |_{1,E} \, .
\]
By standard polynomial properties and by definition of $z$, we have that
\[
\| p \|^2_{0,E}
\lesssim
\int_E \phi p^2 {\rm d}E
=
\int_E (\PiZeroE v_h - w) \, p \,  {\rm d}E \, ;
\]
therefore the Cauchy-Schwarz inequality gives
\[
\| p \|_{0,E} 
\leq 
\| \PiZeroE v_h\|_{0,E} + \| w \|_{0,E} \, .
\]
Exploiting Corollary \ref{cor:easy} and Lemma \ref{lm:beiraos-lemma}, we deduce
\[
\| p \|_{0,E} 
\lesssim
 \| \Pi^{0,E}_{k-2} v_h \|_{0,E}
+ \sum_{F \in \mathcal{F}^{c}_E} h_E^{1/2} \| v_h \|_{0,F}
+ \sum_{F \in \mathcal{F}^{nc}_E} h_E^{1/2} \| \Pi^{0,F}_{k-1} v_h \|_{0,F} \, .
\]
In conclusion, we obtain the desired estimate 
combining the above bounds with some manipulations and an inverse estimate for polynomials
\[
\begin{aligned}
|v_h|_{1,E} &\leq | z |_{1,E}
\leq | w |_{1,E} + | \phi p|_{1,E} \\
&\leq h_E^{-1} \| \Pi^{0,E}_{k-2} v_h \|_{0,E}
+ \sum_{F \in \mathcal{F}^{c}_E} h_E^{-1/2} \| v_h \|_{0,F}
+ \sum_{F \in \mathcal{F}^{nc}_E} h_E^{-1/2} \| \Pi^{0,F}_{k-1} v_h \|_{0,F} \, .
\end{aligned}
\]
 \end{proof}

\begin{corollary} \label{cor:stab_upper_bound}
Let the mesh assumptions in Section \ref{sec:mesh-ass} hold.
Then  the following estimate holds for all $E \in \Omega_h$ and $v_h \in V_h^k(E)$
\begin{equation}
|v_h|_{1,E}^2 \lesssim h_E \sum_{i=1}^{\#dof} |\text{dof}_E^i(v_h)|^2.
\end{equation}
\end{corollary}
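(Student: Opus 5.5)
The plan is to start from Proposition \ref{prop:judoboy}, which bounds $|v_h|_{1,E}$ by three groups of terms. It then suffices to bound each group, squared and multiplied by the appropriate power of $h_E$, by $h_E$ times the sum of the squares of the corresponding DoFs. Squaring the estimate of Proposition \ref{prop:judoboy} and using that the number of faces of $E$ is uniformly bounded under the mesh assumptions of Section \ref{sec:mesh-ass}, we get
\[
|v_h|_{1,E}^2 \lesssim h_E^{-2} \| \Pi^{0,E}_{k-2} v_h \|_{0,E}^2
+ \sum_{F \in \mathcal{F}^{c}_E} h_E^{-1} \| v_h \|_{0,F}^2
+ \sum_{F \in \mathcal{F}^{nc}_E} h_E^{-1} \| \Pi^{0,F}_{k-1} v_h \|_{0,F}^2 ,
\]
and we then treat the three terms one at a time.

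\emph{Volume term.} Write $\Pi^{0,E}_{k-2} v_h = \sum_i c_i \varphi_i$ in the DoF basis of $\Pol_{k-2}(E)$ with $\|\varphi_i\|_{L^\infty(E)}=1$. Shape regularity (star-shapedness with respect to a ball of radius $\rho h_E$) gives, by a scaling argument, that the Gram matrix $G_{ij}=|E|^{-1}\int_E \varphi_i\varphi_j$ is uniformly equivalent to the identity. The vector of DoFs is $d = G c$. Hence $\|\Pi^{0,E}_{k-2}v_h\|_{0,E}^2 = |E|\, c^T G c \simeq |E|\,|d|^2 \simeq h_E^3 |d|^2$, so $h_E^{-2}\|\Pi^{0,E}_{k-2}v_h\|^2_{0,E} \lesssim h_E \sum |\mathrm{dof}|^2$.

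\emph{Nonconforming faces.} The same argument works on $F$ with $|F|\simeq h_F^2\simeq h_E^2$. It gives $\|\Pi^{0,F}_{k-1}v_h\|_{0,F}^2 \simeq h_E^2 \sum_i |\mathrm{dof}_i|^2$, hence $h_E^{-1}\|\Pi^{0,F}_{k-1}v_h\|^2_{0,F}\lesssim h_E\sum|\mathrm{dof}|^2$.

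\emph{Conforming faces.} This is the main step. Here $v_h|_F\in V^k_h(F)$ is a genuine two-dimensional enhanced VEM function, and we need the 2D norm equivalence
\[
\|v_h\|_{0,F}^2 \lesssim h_F^2 \sum_{i} |\mathrm{dof}^i_F(v_h)|^2 ,
\]
summing over the nodal values on $\partial F$ and the interior moments of degree $k-2$. I would invoke the known 2D result, Lemma 4.6 / Corollary 4.6 in \cite{Chen-Huang:2018}, or equivalently the stability analysis in \cite{Brenner2017}, which holds under star-shapedness of $F$. Its proof follows the same pattern as Proposition \ref{prop:judoboy} one dimension lower: bound $\|v_h\|_{0,F}$ by $\|\Pi^{0,F}_{k-2}v_h\|_{0,F}$, $h_F^{1/2}\|v_h\|_{0,\partial F}$ and an $H^1$ term controlled via a Poincar\'e-type inequality, then control $\|v_h\|_{0,\partial F}$ by the nodal values, since $v_h|_e\in\Pol_k(e)$ is determined by its $k+1$ nodal values with $\|v_h\|^2_{0,e}\simeq h_e\sum |v_h(\text{nodes})|^2$. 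The delicate point is the $L^2$ control of a virtual function from its DoFs; this is where the enhancement and the 2D analog of Lemma \ref{lem:w:prop} are needed. If a direct citation is not available, I would reprove it by mimicking Lemmas \ref{lem:w:prop}--\ref{lm:beiraos-lemma} in 2D, together with a Poincar\'e--Friedrichs inequality using the boundary values. With $h_F\simeq h_E$, this yields $h_E^{-1}\|v_h\|_{0,F}^2\lesssim h_E\sum|\mathrm{dof}|^2$.

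Summing the three contributions, and noting that each DoF is counted at most a bounded number of times, gives the claimed bound $|v_h|_{1,E}^2 \lesssim h_E \sum_{i} |\mathrm{dof}_E^i(v_h)|^2$.
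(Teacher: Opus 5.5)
Your proposal is correct and follows the paper's own proof essentially step by step. You square Proposition~\ref{prop:judoboy}, bound the volume and nonconforming-face moment terms by scaling (your Gram-matrix argument just makes the paper's ``standard scaling argument'' explicit), and control $\|v_h\|_{0,F}$ on conforming faces through the 2D VEM norm equivalence of Chen--Huang and Beir\~ao da Veiga--Lovadina--Russo, which is exactly what the paper cites.
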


\begin{proof}
We proceed by bounding each term that appears in the right-hand side of Proposition \ref{prop:judoboy}.
The projection $\Pi^{0,E}_{k-2} v_h$ is determined by the interior DoFs in the element $E$. Since the Dofs scale independently of the element size \cite{BeiradaVeiga-Brezzi-Marini-Russo:2014}, a standard scaling argument gives
$$\| \Pi^{0,E}_{k-2} v_h \|_{0,E}^2 \simeq |E| \sum_{i \in \mathcal{N}^{E}_{int}} |\text{dof}_E^i(v_h)|^2 \simeq h_E^3 \sum_{i \in \mathcal{N}^{E}_{int}} |\text{dof}_E^i(v_h)|^2 \, ,$$
where $\mathcal{N}^{E}_{int}$ denotes the index set of the internal DoFs of $E$.
Similarly, on the nonconforming faces, it holds that
$$\| \Pi^{0,F}_{k-1} v_h \|_{0,F}^2 \simeq |F| \sum_{i \in \mathcal{N}^{F}_{nc}} |\text{dof}_F^{\,i}(v_h)|^2 \simeq h_E^2 \sum_{i \in \mathcal{N}^{F}_{nc}} |\text{dof}_F^{\,i}(v_h)|^2 \, .$$

Finally, for the conforming faces $F \in \mathcal{F}^{c}_E$, the restriction $v_h|_F$ is a 2D Virtual Element function. By applying standard stabilization results for 2D VEM functions \cite{BeiraodaVeiga-Lovadina-Russo:2017,Chen-Huang:2018}, we thus obtain
$$
\| v_h \|_{0,F}^2 \simeq |F| \sum_{i \in \mathcal{N}^{F}_{c}} |\text{dof}_F^{\,i}(v_h)|^2 \simeq h_E^2 \sum_{i \in \mathcal{N}^{F}_{c}} |\text{dof}_F^{\,i}(v_h)|^2 \, .
$$
Substituting these three bounds in Proposition \ref{prop:judoboy}, we obtain the desired estimate.
\end{proof}

We conclude this section by showing the following interpolation estimate.
\begin{proposition}[Interpolation estimates]\label{lm:portugal-interpolation}
Let the mesh assumptions in Section \ref{sec:mesh-ass} hold.
Let the integer $m$ satisfy $2 \le m \le k+1$ and $u \in H^{m}(\Omega_h) \cap H_0^1(\Omega)$. 
Then there exists $u_I \in V_h^k(\Omega_h)$ such that
\[
| u - u_I |_{1,E} \lesssim h_E^{m-1} | u |_{m,E} \quad \forall E \in \Omega_h \, .
\]
\end{proposition}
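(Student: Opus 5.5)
The interpolant will be built through the DoFs rather than constructed by hand. The stability machinery of Proposition~\ref{prop:judoboy} then lets us compare $u_I$ with a polynomial approximation of $u$.

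\emph{Definition of $u_I$.} Since $m\ge 2$ and we are in three dimensions, $u$ is continuous, so pointwise values are well defined. We let $u_I\in V_h^k(\Omega_h)$ be the DoF interpolant of $u$, i.e.\ $\mathrm{dof}^i(u_I)=\mathrm{dof}^i(u)$ for every global DoF. This includes the vertex and edge evaluations on conforming faces, the face moments on conforming and nonconforming faces, and the interior moments of degree $k-2$. Because $u\in H^1_0(\Omega)$ is single-valued across faces, these global values are consistent, so $u_I$ is a well-defined element of the global space. For the enhanced space the interior moments of degree $k-1$ and $k$ of $u_I$ are fixed by the enhancement condition. We therefore do not have $\PiZeroE u_I=\PiZeroE u$, but we do have $\Pi^{0,E}_{k-2}u_I=\Pi^{0,E}_{k-2}u$ and $\Pi^{0,F}_{k-1}u_I=\Pi^{0,F}_{k-1}u$ on nonconforming faces. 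On each conforming face $F$, $u_I|_F$ is the standard 2D enhanced VEM interpolant of $u|_F$.

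\emph{Local estimate.} We fix $E$ and let $u_\pi\in\Pol_k(E)$ be the averaged Taylor polynomial, for which $|u-u_\pi|_{s,E}\lesssim h_E^{m-s}|u|_{m,E}$ for $s\le m$, by Bramble--Hilbert under star-shapedness. Polynomials belong to $V_h^k(E)$: on conforming faces they lie in $V_h^k(F)$, their Laplacian is in $\Pol_k$, the enhancement holds since $\PiNablaE p=p$, and normal derivatives on faces are in $\Pol_{k-1}$. The interpolant reproduces polynomials, so $(u-u_\pi)_I=u_I-u_\pi$ locally. We then write
\[
|u-u_I|_{1,E}\le |u-u_\pi|_{1,E}+|(u-u_\pi)_I|_{1,E}.
\]
The first term is already bounded. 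For the second we apply Proposition~\ref{prop:judoboy} to $v_h=(u-u_\pi)_I$, with $\delta:=u-u_\pi$:
\[
|(\delta)_I|_{1,E}\lesssim h_E^{-1}\|\Pi^{0,E}_{k-2}\delta\|_{0,E}+\sum_{F\in\mathcal F^c_E}h_E^{-1/2}\|\delta_I\|_{0,F}+\sum_{F\in\mathcal F^{nc}_E}h_E^{-1/2}\|\Pi^{0,F}_{k-1}\delta\|_{0,F}.
\]
The interior term is bounded by $h_E^{-1}\|\delta\|_{0,E}\lesssim h_E^{m-1}|u|_{m,E}$. Each nonconforming face term is bounded by $h_E^{-1/2}\|\delta\|_{0,F}$. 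The scaled trace inequality $\|\delta\|_{0,F}^2\lesssim h_E^{-1}\|\delta\|_{0,E}^2+h_E|\delta|_{1,E}^2$ then gives $\lesssim h_E^{m-1}|u|_{m,E}$.

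\emph{Main obstacle: the conforming faces.} Here we must control $\|\delta_I\|_{0,F}$, where $\delta_I|_F$ is the 2D VEM interpolant, involving point values. We plan to use the DoF norm equivalence on $F$ already invoked in Corollary~\ref{cor:stab_upper_bound}, namely $\|\delta_I\|_{0,F}^2\simeq |F|\sum_i|\mathrm{dof}_F^i(\delta)|^2$. The face moments are bounded by $|F|^{-1/2}\|\delta\|_{0,F}$, as before. The nodal values require $|\delta(\mathbf x)|\le\|\delta\|_{L^\infty(E)}$. The scaled Sobolev embedding $H^2(E)\subset C^0(\bar E)$ on star-shaped domains gives
\[
\|\delta\|_{L^\infty(E)}\lesssim h_E^{-3/2}\big(\|\delta\|_{0,E}+h_E|\delta|_{1,E}+h_E^2|\delta|_{2,E}\big)\lesssim h_E^{m-3/2}|u|_{m,E},
\]
using $m\ge2$; this is exactly where the hypothesis $m\ge 2$ enters. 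Multiplying by $|F|^{1/2}\simeq h_E$ and by $h_E^{-1/2}$ yields $h_E^{m-1}|u|_{m,E}$. Summing the three contributions over the uniformly bounded number of faces (a consequence of shape regularity) proves the claim.
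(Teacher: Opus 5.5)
Your proposal is correct and follows the same route as the paper. Both arguments use the DoF interpolant, polynomial reproduction, the triangle inequality with a local polynomial approximant, and the stability bound of Proposition~\ref{prop:judoboy}, and then estimate the DoFs of $u-p_k$ through $L^2$, trace and $L^\infty$ bounds, with $m\ge 2$ entering via the Sobolev embedding. The only difference is cosmetic: the paper passes through the full DoF bound of Corollary~\ref{cor:stab_upper_bound} and controls every DoF by $h_E\|u-p_k\|^2_{L^\infty(E)}$, whereas you apply Proposition~\ref{prop:judoboy} directly and need the $L^\infty$ bound only for the nodal values on conforming faces.
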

\begin{proof}
We fix an element $E \in \Omega_h$. Let $p_k \in \Pol_k(E)$ be a polynomial approximant of $u$. Using the triangle inequality, it holds
\begin{equation}\label{eq:1}
| u - u_I |_{1,E} \leq | u - p_k |_{1,E} + | p_k - u_I |_{1,E}.
\end{equation}
The first term is controlled by standard approximation estimates for polynomials on star shaped domains (see for instance \cite{dupont-scott}):
\begin{equation}\label{eq:2}
| u - p_k |_{1,E} \lesssim  h_E^{m-1} | u |_{m,E}.
\end{equation}
For the second term, since the interpolation operator preserves polynomials of degree $k$, we have $p_k = (p_k)_I$, which implies $p_k - u_I = (p_k - u)_I$. Hence using Proposition \ref{prop:judoboy}, we can bound  the norm of $(p_k - u)_I$ as
\[
\begin{aligned}
|(p_k - u)_I|_{1,E}
&\lesssim
h_E \sum_{\text{dof}} \text{dof}^{\,i}_E((p_k - u)_I \bigr) \text{dof}^{\,i}_E\bigl((p_k - u)_I \bigr) 
= h_E \sum_{\text{dof}} | \text{dof}^{\,i}_E( p_k - u ) |^2
\, . 
\end{aligned}
\]
Using standard arguments and the definition of the degrees of freedom, it is easy to check that 
$$
\begin{aligned}
h_E \sum_{\text{dof}} | \text{dof}^{\,i}_E(p_k - u) |^2 
& \lesssim h_E^{-2} \| p_k - u \|_{0,E}^2 + h_E^{-1} \sum_{F \in \mathcal{F}_E} \| p_k - u \|_{0,F}^2
+ h_E \| p_k - u \|_{L^\infty(E)}^2 \\
& \lesssim h_E \| p_k - u \|_{L^\infty(E)}^2 \, ,
\end{aligned}
$$
which in turn is bounded by standard polynomial approximation results on star-shaped domains in the $L^\infty$ norm. 
Combining the two inequalities above we obtain  
\begin{equation}\label{eq:3}
|(p_k - u)_I|_{1,E} \lesssim h_E^{m-1} | u |_{m,E}.
\end{equation}
The result now follows from \eqref{eq:1}, \eqref{eq:2} and \eqref{eq:3}.
\end{proof}

Finally, we can state the following convergence result.

\begin{theorem}\label{th:main}
Let the mesh assumptions in Section \ref{sec:mesh-ass} hold.
The discrete problem \eqref{eq:prob-disc} has a unique solution $\mathbf{u}_h \in \mathbf{V}_h^k(\Omega_h)$. Moreover, if the solution $\mathbf{u}$ of the continuous problem satisfies $\mathbf{u} \in H^m(\Omega_h)$ with $1 \leq m \leq k+1$, the following estimate holds:
\[
| \mathbf{u} - \mathbf{u}_h |_{1,\Omega_h} 
\lesssim 
\sum_{E \in \Omega_h} h_E^{m-1} | \mathbf{u} |_{m,E} \, .
\]
\end{theorem}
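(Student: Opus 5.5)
The plan is to follow the standard abstract nonconforming VEM convergence framework, in the spirit of Strang's second lemma. It needs (i) polynomial consistency and stability of the local forms $a_h^E$, (ii) the interpolation estimate of Proposition \ref{lm:portugal-interpolation}, (iii) a bound on the loading term, and (iv) a bound on the \emph{consistency (nonconformity) error} coming from the nonconforming faces.

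\textbf{Step 1: consistency and stability.} Consistency is immediate from the definition of $a_h^E$. If either argument is in $[\mathbb{P}_k(E)]^3$, the stabilization vanishes, since $(I-\PiZeroE)\mathbf{p}=0$. Moreover $\mathbf{\Pi}^{0,E}_{k-1}\boldsymbol{\varepsilon}(\mathbf{p})=\boldsymbol{\varepsilon}(\mathbf{p})$, so $a_h^E(\mathbf{p},\mathbf{v}_h)=a^E(\mathbf{p},\mathbf{v}_h)$. For stability I need
\[
a_h^E(\mathbf{v}_h,\mathbf{v}_h)\simeq a^E(\mathbf{v}_h,\mathbf{v}_h)
\]
modulo rigid motions. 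The upper bound follows from Corollary \ref{cor:stab_upper_bound}-type equivalences, namely Proposition \ref{prp:beirao-inequality} applied to $(I-\PiZeroE)\mathbf{v}_h$, together with continuity of projections. For the lower bound, I combine the following pieces: Corollary \ref{cor:stab_upper_bound} applied to $(I-\PiZeroE)\mathbf{v}_h$; the bound $|\mathbf{v}_h|_{1,E}\le|\PiZeroE\mathbf{v}_h|_{1,E}+|(I-\PiZeroE)\mathbf{v}_h|_{1,E}$; and a local Korn inequality on $E$ for polynomials, $|\mathbf{p}-\text{rigid}|_{1,E}\lesssim\|\boldsymbol{\varepsilon}(\mathbf{p})\|_{0,E}$. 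This yields coercivity of $a_h$ in the broken norm, $a_h(\mathbf{v}_h,\mathbf{v}_h)\gtrsim\|\boldsymbol{\varepsilon}_h(\mathbf{v}_h)\|^2_{0,\Omega_h}$.

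\textbf{Step 2: well-posedness.} To conclude existence and uniqueness, I need a discrete broken Korn inequality on $\mathbf{V}^k_h(\Omega_h)$ with homogeneous Dirichlet data:
\[
|\mathbf{v}_h|_{1,\Omega_h}\lesssim\|\boldsymbol{\varepsilon}_h(\mathbf{v}_h)\|_{0,\Omega_h}.
\]
I will appeal to Brenner's piecewise Korn inequality. It requires control of the jumps through $\Pi^{0,F}_1$ of the jumps on faces. On conforming faces the jumps vanish. On nonconforming faces, since $k\ge1$, the moments up to degree $k-1\ge0$ are single valued. For $k=1$ only face averages are matched, which is exactly the Crouzeix--Raviart situation where Korn fails. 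I therefore expect this to be the main obstacle. The first thing I would try is to use the full gradient: either exploit that the stabilization controls the full $H^1$ seminorm, or assume $k\ge2$ so that $\Pi^{0,F}_1[\![\mathbf{v}_h]\!]=0$, and then invoke Brenner's inequality. Once this holds, Lax--Milgram in finite dimensions gives a unique $\mathbf{u}_h$.

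\textbf{Step 3: error estimate.} I set $\boldsymbol{\delta}_h=\mathbf{u}_h-\mathbf{u}_I$, with $\mathbf{u}_I$ from Proposition \ref{lm:portugal-interpolation}, and take $\mathbf{u}_\pi$ as the piecewise polynomial approximation. Then
\[
\alpha\,|\boldsymbol{\delta}_h|^2_{1,\Omega_h}\lesssim F_h(\boldsymbol{\delta}_h)-F(\boldsymbol{\delta}_h)+\sum_E a_h^E(\mathbf{u}_\pi-\mathbf{u}_I,\boldsymbol{\delta}_h)+\sum_E a^E(\mathbf{u}-\mathbf{u}_\pi,\boldsymbol{\delta}_h)+\mathcal{N}_h(\mathbf{u},\boldsymbol{\delta}_h),
\]
where
\[
\mathcal{N}_h(\mathbf{u},\boldsymbol{\delta}_h)=\sum_{F\in\mathcal{F}^n}\int_F\boldsymbol{\sigma}(\mathbf{u})\mathbf{n}\cdot[\![\boldsymbol{\delta}_h]\!]\,{\rm d}F
\]
comes from elementwise integration by parts of $a(\mathbf{u},\cdot)$. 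The loading term is bounded by $h^{m-1}$ using $L^2$ orthogonality of $\PiZeroE$ and Poincaré on each element. The polynomial and interpolation terms are bounded by Step 1 and Proposition \ref{lm:portugal-interpolation}. For $\mathcal{N}_h$, I subtract $\Pi^{0,F}_{k-1}$ of $\boldsymbol{\sigma}(\mathbf{u})\mathbf{n}$, which is allowed since the jump is orthogonal to $\mathbb{P}_{k-1}(F)$. I then subtract face averages of the jump, and use trace and approximation estimates to obtain a bound of order $h_E^{m-1}|\mathbf{u}|_{m,E}\,|\boldsymbol{\delta}_h|_{1,E}$. A triangle inequality finishes the proof.
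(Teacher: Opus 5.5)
Your Steps 1 and 3 follow the paper's route: the paper gets $a^E(\mathbf v_h,\mathbf v_h)\lesssim a_h^E(\mathbf v_h,\mathbf v_h)\lesssim a^E(\mathbf v_h,\mathbf v_h)$ from Proposition~\ref{prp:beirao-inequality} and Corollary~\ref{cor:stab_upper_bound}. It then runs the standard Strang-type VEM argument with the interpolant of Proposition~\ref{lm:portugal-interpolation}, and treats the face terms as in the nonconforming VEM literature.

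\textbf{The gap: Step 2 for $k=1$.} You correctly isolate it but do not close it, and the paper's proof is silent on it. The local equivalence only gives coercivity in $\|\boldsymbol\varepsilon_h(\cdot)\|_{0,\Omega_h}$. Uniqueness, and your first inequality in Step 3 (which already assumes control of $|\boldsymbol\delta_h|_{1,\Omega_h}$), need a discrete Korn inequality.

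Your remedy (a) cannot work. $S_h^E$ only acts on $(I-\PiZeroE)\mathbf v_h$, so it is blind to elementwise polynomial modes. The modes that break Korn are piecewise rigid motions, which are such polynomials.

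In fact, for $k=1$ the statement fails in the generality claimed. Take a tetrahedral mesh with all faces nonconforming and $k=1$.
\begin{itemize}
\item $V_h^1(E)$ has $4$ DoFs and contains $\mathbb P_1(E)$, so it equals $\mathbb P_1(E)$.
\item Hence $S_h^E\equiv 0$ and $a_h^E=a^E$: the scheme is exactly Crouzeix--Raviart with the $\boldsymbol\varepsilon$-form.
\item A piecewise rigid motion lies in the homogeneous discrete space iff its face means (its centroid values) agree across interior faces and vanish on Dirichlet faces.
\item That is $3N_F^{i}+3N_F^{D}$ equations for $6N_E=3N_F^{i}+\tfrac32 N_F^{\partial}$ unknowns.
\end{itemize}
So $a_h$ has a nontrivial kernel whenever $N_F^{D}<N_F^{\partial}/2$, for instance on the unit cube with Dirichlet data on a single face, although the continuous problem is well posed. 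No argument can close Step 2 there.

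\textbf{What does work.} Your remedy (b), $k\ge 2$ with a Brenner-type piecewise Korn inequality, is one correct restriction; you would still have to justify that inequality on general polyhedral meshes, since sub-tetrahedralizing a face does not preserve $\Pi^{0,F}_1[\![\mathbf v_h]\!]=0$ on the sub-faces.

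You missed the other fix, which is the configuration the paper actually uses: nonconforming faces only on $\partial\Omega$.
\begin{itemize}
\item All interior traces then glue, so $\mathbf V_h^k(\Omega_h)\subset[H^1(\Omega)]^3$ and there are no interior jumps at all.
\item The continuous Korn inequality $|\mathbf v|_{1,\Omega}\lesssim\|\boldsymbol\varepsilon(\mathbf v)\|_{0,\Omega}+\|\mathbf v\|_{0,\Gamma_D}$ applies.
\item On each Dirichlet face either $\mathbf v_h=0$ or $\Pi^{0,F}_0\mathbf v_h=0$, so $\|\mathbf v_h\|_{0,F}\lesssim h_E^{1/2}|\mathbf v_h|_{1,E}$.
\item Absorbing this term yields discrete Korn for $h$ small, for every $k\ge 1$.
\end{itemize}

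\textbf{Two smaller points in Step 3.}
\begin{itemize}
\item The Neumann part of $F_h-F$ in \eqref{eq:FhE} is a face integral, so $L^2(E)$-orthogonality of $\PiZeroE$ says nothing there. A trace inequality only gives $O(h^{1/2})$ unless you switch to face projections.
\item Proposition~\ref{lm:portugal-interpolation} needs $m\ge 2$, so your interpolant does not cover $m=1$.
\end{itemize}
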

\begin{proof}
By virtue of Proposition \ref{prp:beirao-inequality} and Corollary \ref{cor:stab_upper_bound}, it is trivial to check by standard VEM arguments \cite{BeiradaVeiga-Brezzi-Cangiani-Manzini-Marini-Russo:2013} that, for every $E \in \Omega_h$, we have
\begin{equation}\label{eq:equiv}
a^E(\mathbf{v}_h, \mathbf{v}_h) \lesssim a_h^E(\mathbf{v}_h, \mathbf{v}_h) \lesssim a^E(\mathbf{v}_h, \mathbf{v}_h) \quad \forall \mathbf{v}_h \in \mathbf{V}_h^k(E) \, .
\end{equation}
The results now follows by the same argument in Theorem 3.1 of \cite{BeiradaVeiga-Brezzi-Cangiani-Manzini-Marini-Russo:2013},
using the above equivalence bound \eqref{eq:equiv}, the interpolation estimate in Proposition \ref{lm:portugal-interpolation} and the definition of the right-hand side \eqref{eq:FhE}.
Finally, the nonconformity error of the space can be treated as in \cite{AyusodeDios-Lipnikov-Manzini:2016, curvo-Yi}.
\end{proof}

We note that the hidden constant in the result above depends on the incompressibility constant $\lambda$. Following the same techniques as in \cite{BeiraodaVeiga-Brezzi-Marini:2013}, one could also show that for $k \ge 2$ the proposed scheme is robust in the quasi-incompressible limit, in the following sense.

\begin{corollary}
Let the same assumptions and notations as in Theorem \ref{th:main}, with the additional condition $k \ge 2$. Then 
\[
| \mathbf{u} - \mathbf{u}_h |_{1,\Omega_h} 
\lesssim 
\sum_{E \in \Omega_h} h_E^{m-1} ( | \mathbf{u} |_{m,E} + | {p} |_{m-1,E} )\, ,
\]
where the hidden constant is independent of $\lambda$ and $p = \lambda \, {\rm div} u$.
\end{corollary}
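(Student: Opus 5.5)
The plan is to follow the mixed displacement--pressure approach of \cite{BeiraodaVeiga-Brezzi-Marini:2013}. The proof only has to track which terms of the error analysis carry the factor $\lambda$. We introduce the auxiliary pressure $p = \lambda\,{\rm div}\,\mathbf{u}$. The continuous problem is then rewritten as finding $(\mathbf{u},p)$ such that
\begin{equation}
2\mu\,(\boldsymbol{\varepsilon}(\mathbf{u}),\boldsymbol{\varepsilon}(\mathbf{v})) + (p,{\rm div}\,\mathbf{v}) = F(\mathbf{v}), \qquad ({\rm div}\,\mathbf{u},q) - \lambda^{-1}(p,q) = 0 .
\end{equation}
The discrete scheme \eqref{eq:prob-disc} is equivalent to a discrete version of this system. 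In it the discrete pressure is $p_h = \lambda\,\Pi^{0,E}_{k-1}{\rm div}\,\mathbf{u}_h$, taken elementwise in the discontinuous space $\mathbb{P}_{k-1}(\Omega_h)$. Indeed, the volumetric term of $a_h^E$ is exactly $\lambda\int_E {\rm tr}(\mathbf{\Pi}^{0,E}_{k-1}\boldsymbol{\varepsilon}(\mathbf{u}_h))\,{\rm tr}(\mathbf{\Pi}^{0,E}_{k-1}\boldsymbol{\varepsilon}(\mathbf{v}_h)) = \lambda\int_E \Pi^{0,E}_{k-1}{\rm div}\,\mathbf{u}_h\, {\rm div}\,\mathbf{v}_h$. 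This identity holds because the trace of the projected strain is the projection of the divergence, and the projected divergence is computable from the DoFs, as in \eqref{eq:pizerograd}. The remaining part of $a_h$ involves only $\mu$. Hence the stability constants from Proposition \ref{prp:beirao-inequality}, Corollary \ref{cor:stab_upper_bound} and Korn's inequality are $\lambda$-independent when applied to the $\mu$-part.

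The key ingredient is a discrete inf-sup condition for the pair $\mathbf{V}_h^k(\Omega_h)\times \mathbb{P}_{k-1}(\Omega_h)$ with $k\ge 2$. Equivalently, we need a Fortin-type operator $\mathbf{v}\mapsto \mathbf{v}_F\in\mathbf{V}_h^k$ with $\int_E {\rm div}(\mathbf{v}-\mathbf{v}_F)\,q=0$ for all $q\in\mathbb{P}_{k-1}(E)$ and $|\mathbf{v}_F|_{1,\Omega_h}\lesssim|\mathbf{v}|_{1,\Omega}$. Integrating by parts gives
\begin{equation}
\int_E {\rm div}\,\mathbf{w}\,q = -\int_E \mathbf{w}\cdot\nabla q + \int_{\partial E}(\mathbf{w}\cdot\mathbf{n})\,q .
\end{equation}
Here $\nabla q\in[\mathbb{P}_{k-2}(E)]^3$ is matched by the interior moment DoFs. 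On nonconforming faces $q|_F\in\mathbb{P}_{k-1}(F)$ is matched exactly by the face moments. On conforming faces we need $\int_F \mathbf{w}\cdot\mathbf{n}\, q$ for $q\in\mathbb{P}_{k-1}(F)$. This is where $k\ge2$ enters: as in \cite{BeiraodaVeiga-Brezzi-Marini:2013}, we first take a Cl\'ement/Scott--Zhang-type quasi-interpolant $\mathbf{v}_1$ built from the DoFs. We then correct it with face bubbles and interior functions in $\mathbf{V}_h^k$ so that the normal-flux moments of order $k-1$ on conforming faces and the interior moments are reproduced. The correction would be bounded by scaling arguments that use Proposition \ref{prop:judoboy} and the dof equivalence. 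I expect this step to be the main obstacle, because the conforming-face moments of degree $k-1$ are not DoFs of $V_h^k(F)$. If that correction proves awkward, I would first try an alternative. The volume $L^2$ projection on $\mathbb{P}_k(E)$ is computable in the enhanced space, so one can instead use the face $\Pi^{0,F}_k$ computable by Remark \ref{mimikyu}. This lets us construct the face correction inside the enhanced 2D space.

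With inf-sup and $\lambda$-uniform coercivity of the $\mu$-part on the divergence-controlled subspace, standard saddle-point theory for perturbed problems (the $-\lambda^{-1}(p,q)$ term is a positive semidefinite perturbation) gives $|\mathbf{u}-\mathbf{u}_h|_{1,\Omega_h}+\|p-p_h\|_0$ bounded by best approximation plus consistency errors. These errors are:
\begin{itemize}
\item the interpolation error, handled by Proposition \ref{lm:portugal-interpolation} and the Fortin property;
\item the pressure approximation $\|p-\Pi^0_{k-1}p\|_{0,E}\lesssim h_E^{m-1}|p|_{m-1,E}$;
\item the polynomial consistency error, which only involves $\mu$ and the pressure;
\item the load term \eqref{eq:FhE};
\item the nonconformity error on $\mathcal{F}^n$.
\end{itemize}
The nonconformity error is treated as in Theorem \ref{th:main}, with the boundary traction written as $2\mu\boldsymbol{\varepsilon}(\mathbf{u})\mathbf{n}+p\,\mathbf{n}$. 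This yields terms in $|\mathbf{u}|_{m}$ and $|p|_{m-1}$ with $\lambda$-free constants. Summing these contributions gives the claimed bound.
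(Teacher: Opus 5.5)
Your overall route is the one the paper intends. The paper gives no proof here, only a pointer to the techniques of \cite{BeiraodaVeiga-Brezzi-Marini:2013}, and the following parts of your plan are fine:
\begin{itemize}
\item rewriting the volumetric term as $\lambda\int_E \Pi^{0,E}_{k-1}({\rm div}\,\mathbf{u}_h)\,{\rm div}\,\mathbf{v}_h$;
\item the discrete pressure $p_h=\lambda\Pi^{0,E}_{k-1}{\rm div}\,\mathbf{u}_h$ in discontinuous $\mathbb{P}_{k-1}$;
\item the perturbed saddle-point argument;
\item the $\lambda$-free bookkeeping of the consistency and nonconformity terms.
\end{itemize}

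\textbf{The gap: the Fortin operator.} This is the one genuinely new ingredient, and your construction of it does not work. You kill the volume term $\int_E(\mathbf{v}-\mathbf{v}_F)\cdot\nabla q$ with the interior moments. You then require every face term to vanish separately, i.e.\ $\int_F(\mathbf{v}-\mathbf{v}_F)\cdot\mathbf{n}\,q=0$ for all $q\in\mathbb{P}_{k-1}(F)$ on each conforming face. The corrections you list cannot achieve this. On a planar face, $(\mathbf{v}_F\cdot\mathbf{n})|_F$ is a scalar function in $V_h^k(F)$. Once its vertex and edge values are fixed (they are shared with other faces), the only remaining freedom is the subspace of $V_h^k(F)$ vanishing on $\partial F$. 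That subspace is determined by the $\mathbb{P}_{k-2}(F)$ moments, so it has dimension $\dim\mathbb{P}_{k-2}(F)$. This is strictly less than the $\dim\mathbb{P}_{k-1}(F)$ constraints you impose. The other options do not help:
\begin{itemize}
\item interior functions do not change the trace;
\item your fallback via $\Pi^{0,F}_k$ concerns what is computable, not the size of the space;
\item using the shared edge and vertex values instead would turn this into a global matching problem with no evident local stability.
\end{itemize}
As written, the inf-sup condition is therefore not established.

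\textbf{How to fix it.} The volume and boundary contributions need not vanish separately; only their sum $\int_E{\rm div}(\mathbf{v}-\mathbf{v}_F)\,q$ must vanish for $q\in\mathbb{P}_{k-1}(E)$.

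First, take a quasi-interpolant and set its face-moment DoFs so that $\int_F(\mathbf{v}-\mathbf{v}_F)\,{\rm d}F=\mathbf{0}$ on every face. On nonconforming faces this uses the $\mathbb{P}_{k-1}(F)$ moments. On conforming faces it uses the $\mathbb{P}_{k-2}(F)$ moments, which contain the mean exactly when $k\ge 2$. This is the only place where $k\ge2$ is needed, and it settles the case of constant $q$.

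Second, with all boundary DoFs frozen, choose the interior moments of $\mathbf{v}_F$ so that
\[
\int_E \mathbf{v}_F\cdot\nabla q\,{\rm d}E=\int_E \mathbf{v}\cdot\nabla q\,{\rm d}E-\int_{\partial E}(\mathbf{v}-\mathbf{v}_F)\cdot\mathbf{n}\,q\,{\rm d}s \qquad \forall q\in\mathbb{P}_{k-1}(E).
\]
This system is solvable for three reasons. The right-hand side vanishes for constant $q$ by the first step. The map $\nabla:\mathbb{P}_{k-1}(E)/\mathbb{R}\to[\mathbb{P}_{k-2}(E)]^3$ is injective. The moments against $[\mathbb{P}_{k-2}(E)]^3$ are free local DoFs. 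The bound $|\mathbf{v}_F|_{1,\Omega_h}\lesssim|\mathbf{v}|_{1,\Omega}$ then follows by scaling and Corollary~\ref{cor:stab_upper_bound}.

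\textbf{A smaller point on Korn.} The $\mu$-part of $a_h^E$ only controls $\|\boldsymbol{\varepsilon}(\mathbf{v}_h)\|_{0,E}$, since it vanishes on rigid motions, and $\mathbf{V}_h^k(\Omega_h)\not\subset[H^1(\Omega)]^3$. So the Korn inequality you need is the broken one for piecewise $H^1$ fields. It holds here because, for $k\ge2$, jumps across faces in $\mathcal{F}^n$ are orthogonal to $\mathbb{P}_1(F)$.
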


Note that requiring boundedness of $\lambda \, {\rm div} u$, the volumetric stress intensity, is an acceptable assumption.

\section{Application to the curved case of curved domains \texorpdfstring{$\Omega$}{Omega}}\label{sec:3}

In this section, we investigate a possible application of the C-NC virtual element space.
We consider the same linear elastic problem considered in the previous section, but now assume that the domain $\Omega$ has a piecewise regular but possibly curved boundary. We now observe the following.
\begin{enumerate}
\item It is well known that using polyhedral (tets and hexahedra included) elements, that is with straight edges, leads to a geometry approximation that, for elements of second or higher order, typically dominates the error.
\item While many conforming VEM with curved faces exist in 2D \cite{BeiraodaVeiga-Russo-Vacca:2019, Bertoluzza-Pennacchio-Prada:2019, Aldakheel2020, BeiraodaVeiga-Brezzi-Marini-Russo:2020}, developing VEM with curved faces in 3D is complicated \cite{curved3D} due to the need to define the VEM space on curved surfaces, combined with the need of computing projections. 
\item On the contrary, curved non-conforming VEM in 3D have been developed in \cite{curvo-Yi} but, from the engineering point of view, when applied in solid mechanics suffer from the drawback of being based on a piecewise discontinuous displacement $u_h$, which is unnatural for a continuous body. 
\end{enumerate}

Our idea in the present section is to apply our conforming-nonconforming elements to the present case, exploiting non-conforming faces to describe the curved boundary and using internal (straight) conforming faces internally to $\Omega$, thus guaranteeing the displacement continuity in the elastic body.

\subsection{Mesh assumptions and discrete space}

Let $\Omega_h$ denote a tessellation of the domain $\Omega$ with possibly curved faces on the boundary. 
We assume that the internal elements of $\Omega_h$ are standard polyhedra with planar faces, whereas the elements adjacent to the boundary may feature curved faces or edges lying exactly on $\partial \Omega$.
For every boundary face $F$, we assume the existence of a sufficiently smooth parametrization $\gamma_F: \widehat{F} \to F$, where $\widehat{F}$ is a planar reference polygon (such parametrization could, for example, be inherited by a CAD parametrization of the domain boundary).
We introduce the following mapped polynomial space for curved faces ($m \in {\mathbb N}$):
\[
\widetilde{\mathbb P}_m(F) 
= 
\{
\widetilde q = q \, \circ \, \gamma_F^{-1} \, \text{ s.t. }\, q \in \mathbb P_m(\widehat{F})
\} \, .
\]
Clearly, if $F$ is not a curved face, this space coincides with the standard polynomial space.
We introduce the C-NC virtual space in the curved case
\begin{equation}\label{pokeball2}
\begin{aligned}
V_h^k(E) := \Big\{ v_h \in H^1(E)\, & :\, \Delta v_h \in \Pol_{k}(E) \, , \, 
v_h|_{\Gamma^{c}_E} \in C^0(\Gamma^{c}_E) \, , \\
& \int_E (v_h-\PiNablaE v_h) p_k \,{\rm d}E = 0 \ \forall \ p_k \in \Pol_k/\Pol_{k-2}(E) \, , \\
&v_h|_F \in V_h^k(F) \ \forall \ F \in {\cal F}^c_E \, , \,
(\partial_{{\bf n}} v_h)|_F \in \widetilde{\Pol}_{k-1}(F) \ \forall \ F \in {\cal F}^n_E  \Big\} .
\end{aligned}
\end{equation}
In this setting, the set ${\cal F}^n_E$ contains all the boundary faces, which may be potentially curved, while the set ${\cal F}^c_E$ contains all the internal (straight) faces.

\begin{remark}
 It is important to observe that the set ${\cal F}^c_E$ of conforming faces may include planar faces with one or more curved edges. This occurs specifically when an internal face shares an edge with a boundary face $F \in {\cal F}^n_E$. While the face surface itself remains planar, its boundary $\partial F$ inherits the curvature from the domain boundary $\partial \Omega$. In this scenario, the construction of the virtual space on $F$ follows the framework developed for 2D curved VEM \cite{BeiraodaVeiga-Russo-Vacca:2019}
\begin{equation}
\begin{aligned}
V_h^k(F) := \Big\{ v \in H^1(F) \cap C^0(\partial F) \text{ s.t. } & \Delta v \in \mathbb{P}_{k}(F) \, , \\
& v|_e \in \begin{cases} \widetilde{\mathbb{P}}_k(e) & \text{if } e \subset \partial \Omega \\ \mathbb{P}_k(e) & \text{otherwise} \end{cases} \qquad \forall e \subset \partial F \, , \\
& \int_F (v - \Pi^{\nabla, F}_k v) p_k \, {\rm d}F = 0 \ \forall p_k \in \mathbb{P}_k / \mathbb{P}_{k-2}(F) \Big\} ,
\end{aligned}
\end{equation}
where $\widetilde{\mathbb{P}}_k(e)$ denotes the space of mapped polynomials on the curved edge $e$ (defined analogously to $\widetilde{\mathbb{P}}_m(F)$).
\end{remark}

The degrees of freedom are defined as follows:
\begin{itemize}
\item pointwise evaluation $v_h(\nu)$ at all vertexes $\nu$ of $E$ which lay in $\Gamma^c_E$;
\vspace{-2mm}
\item pointwise evaluation of $v_h$ at $k-2$ distinct points for each edge $e$ of $E$ which lays in $\Gamma^c_E$;
\vspace{-2mm}
\item moments $\frac{1}{|F|} \int_F v_h \, \varphi_i$, $i=1,..,M$ with $\{ \varphi_i\}_{i=1}^M$ basis of $\Pol_{k-2}(F)$ for all $F \in {\cal F}^c_E$ ;
\vspace{-2mm}
\item moments $\frac{1}{|F|} \int_{F} v_h  \widetilde \varphi_i$, $i=1,\dots,K$, with $\{ \widetilde{\varphi}_i\}_{i=1}^K$ being a basis of $\widetilde\Pol_{k-1}(F)$, for all $F \in {\cal F}^n_E$
\vspace{-2mm}
\item moments $\frac{1}{|E|} \int_E v_h \, \varphi_i$, $i=1,..,L$ with $\{ \varphi_i\}_{i=1}^L$ basis of $\Pol_{k-2}(E)$.
\end{itemize}

Once the conforming/nonconforming faces have been identified accordingly to the above rule and the degrees of freedom are available, the construction of the method follows the same identical procedure as for the straight edge case in Section \ref{sec:elast:discr}. The main differences is that many integrals (of known functions and polynomials) will now be on curved polyhedra and curved faces, which requires advanced integration rules (see for instance \cite{chinsuku,sommariva2009gauss}).

Concerning the computation of the energy projection $\PiNablaE$, by the same calculation as in \eqref{eq:squirtle} its right-hand side requires evaluating the following boundary terms on the nonconforming faces
\[
\sum_{F \in {\cal F}^n_E}
\int_F (\partial_\mathbf{n} p_k) v_h \, {\rm d}s \, ,
\]
which, following the idea in \cite{curvo-Yi}, we approximate with 
\begin{equation}\label{eq:zzz}
\sum_{F \in {\cal F}^n_E}
\int_F (\partial_\mathbf{n} p_k) \widetilde \Pi^{0,F}_{k-1} v_h \, {\rm d}s \, ;
\end{equation}
see Sections 3.2 (and also Section 7) of \cite{curvo-Yi} for the definition of such projector.

Note that such step is indeed an approximation since, differently from the planar face case, the scalar $\partial_\mathbf{n} p_k$ on $F$ is not in $\widetilde{\mathbb P}_{k-1}(F)$.
It is immediate to check that the quantity in \eqref{eq:zzz} is directly and exactly computable using solely the nonconforming degrees of freedom associated with the face $F$. By plugging this projection into the integral, we successfully assemble the right-hand side. The same identical observation/approximation is considered in the calculation of the projection operator in \eqref{eq:pizerograd}.

\section{Numerical tests} \label{sec:4}

In this section, we present the numerical results for the C--NC method across three different scenarios. 
Specifically, in Sections~\ref{sec:convSt} and~\ref{sec:convCur}, we
numerically verify the expected convergence rates for domains with planar faces and curved faces, respectively. 
Finally, Section~\ref{sec:convGenCur} provides a proof of concept for the method's ability 
to handle curved surfaces tessellated with arbitrary (possibly non planar) polygonal elements.
All numerical experiments have been implemented using the \texttt{C++} library \texttt{vem++}~\cite{vem++}.

The computational domain and the data of the PDEs are specified in each subsection. 
However, we always evaluate the accuracy of the method using the following error indicators
\[
e_{L^2} := \left( \sum_{E\in\Omega_h}\left\|\uu-\Pi^0_k \uu_h\right\|^2_{0,E} \right)^{1/2} \, ,
\qquad
e_{H^1} := \left( \sum_{E\in\Omega_h}\left\|\nabla\uu-\nabla\left(\Pi_k^\nabla \uu_h\right)\right\|^2_{0,E} \right)^{1/2} \, ,
\]
where \(\uu\) is the exact displacement solution. 
Here, \(\Pi^0_k \uu_h\) is the \(L^2\) polynomial vector projection of the discrete solution \(\uu_h\), 
whose components are defined in Equation~\eqref{eqn:scalPiZero}. 
The projection \(\Pi_k^\nabla \uu_h\) is constructed in a similar way 
but employing the scalar \(H^1\) projection defined in Equation~\eqref{eqn:scalPiNabla} for each component.
In view of the theoretical results,
we aim to verify that $e_{L^2}=\mathcal{O}(h^{k+1})$ and $e_{H^1}=\mathcal{O}(h^{k})$.
 
\subsection{Fully planar boundaries}\label{sec:convSt}

In this experiment, we investigate the convergence of the C--NC method in a domain where all boundaries are planar. 
We consider the unit cube $\Omega=(0, 1)^3$ as the computational domain. 
The Lamé coefficients are set to $\lambda = 2$ and $\mu = 1$, 
and the right-hand side of Equation~\eqref{eq:problemacontinuo} is chosen such 
that the exact displacement field is given by:
$$
\uu = \begin{pmatrix}
e^z + \sin(x) + \sin (y) \\
e^y + \sin(x) + \sin (z) \\
e^x + \sin(y) + \sin (z)
\end{pmatrix}.
$$
We impose Dirichlet boundary conditions on all faces except for \(z=0\) and \(z=1\), 
where Neumann boundary conditions are applied. 

To obtain a polyhedral mesh of \(\Omega\),
we generate the initial Voronoi tessellation with \texttt{voro++}~\cite{voro++}
which is then optimized via a Lloyd's algorithm.
We verify the convergence of the error indicators \(e_{L^2}\) and \(e_{H^1}\) 
by computing the errors on a sequence of four meshes with decreasing mesh size \(h\).
In Figure~\ref{fig:cubeVoro}, we show one of these meshes and 
its exploded view to better highlight the elements' shapes.

\begin{figure}[!htb]
\centering
\includegraphics[width=0.8\textwidth]{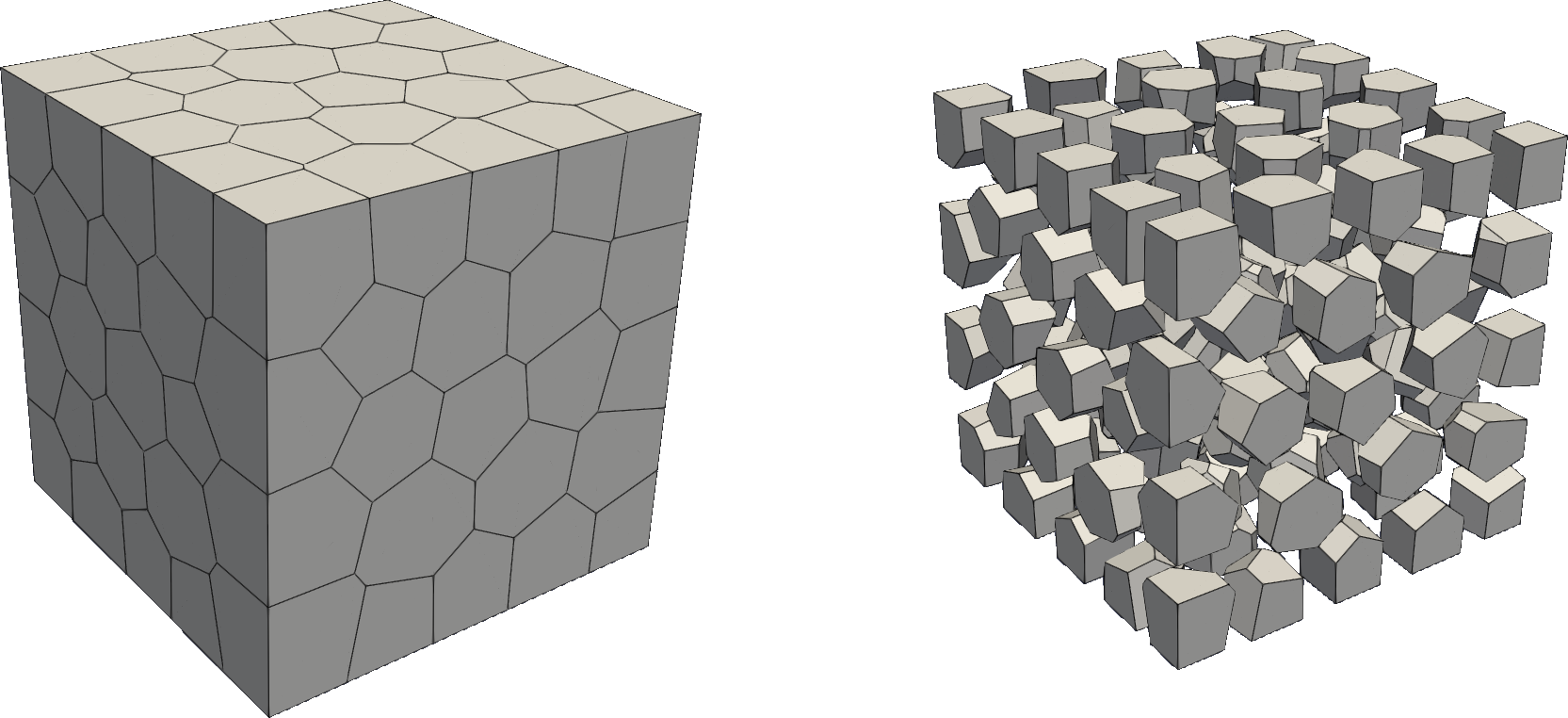}
\caption{The full cube tessellated, on the left, the exploded view to highlight the interior, right.}
\label{fig:cubeVoro}
\end{figure}

The global discrete space \(\mathbf{V}^k_h(\Omega_h)\) is built in such a way that 
all polygons lying on the plane \(z=1\) are non-conforming faces; 
specifically, for each element \(E\) with a face \(F\) on \(z=1\), it holds that \(F\in \mathcal{F}^n_E\). All the remaining faces of the mesh are conforming, i.e. in ${\cal F}^c$.
We consider the VEM approximation degrees $k=1$ and $2$.

The convergence lines are depicted in Figure~\ref{fig:test1-results}.
Both error indicators exhibit the expected convergence rates;
specifically, $e_{L^2}$ behaves as $\mathcal{O}(h^{k+1})$,
while $e_{H^1}$ behaves as $\mathcal{O}(h^{k})$. 

\begin{figure}[htbp]
\centering
\begin{tabular}{cc}
\includegraphics[width=0.42\textwidth]{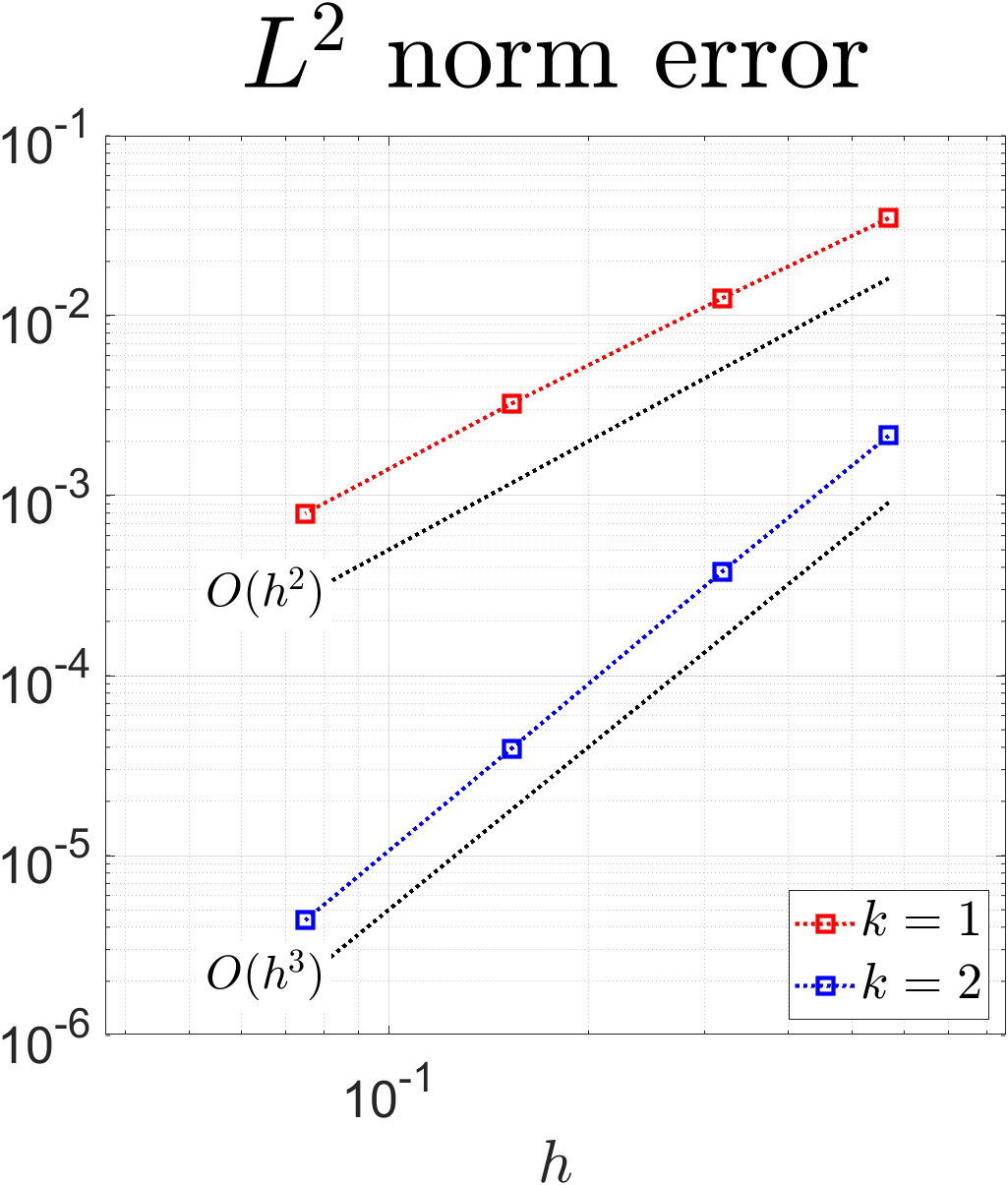}  &
\includegraphics[width=0.42\textwidth]{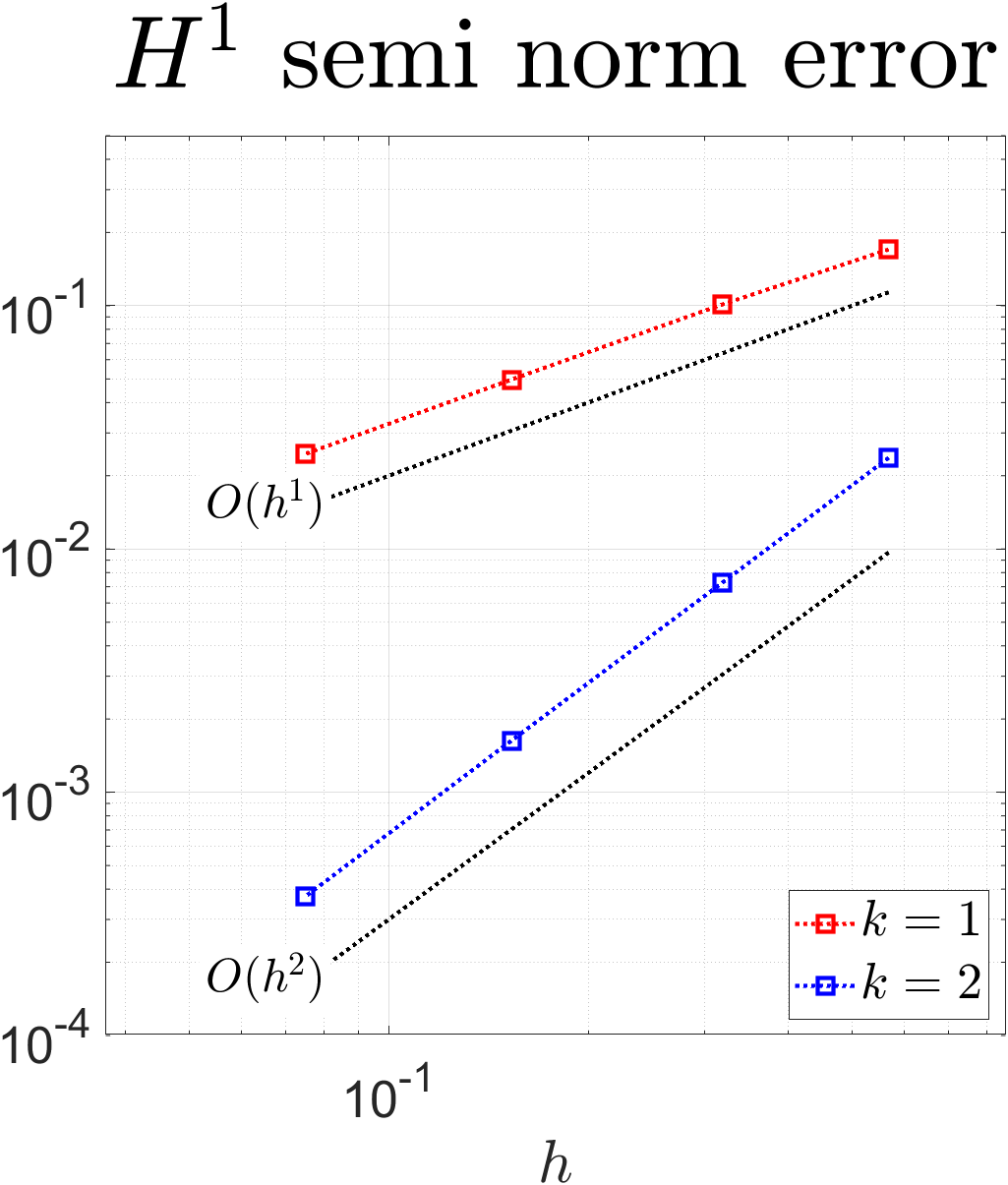}
\end{tabular}
\caption{Fully planar boundaries: convergence lines of $e_{L^2}$ and $e_{H^1}$.}
\label{fig:test1-results}
\end{figure}

\subsection{Planar and curved faces}\label{sec:convCur}

In this experiment, we investigate the convergence of the C--NC method in a domain characterized by both planar and curved boundaries. 
Specifically, we consider a cylinder of radius 1 and height 2 as the computational domain; consequently, the top and bottom bases are planar, while the lateral surface is curved.

The Lamé coefficients are set to $\lambda = \mu = 1$. 
We consider the exact displacement field given by:
\[
\uu(x, y, z) = 
\begin{pmatrix}
e^x \sin(y+z) \\
e^y \sin(x+z) \\
e^z \sin(x+y)
\end{pmatrix},
\]
and the data of the problem are prescribed accordingly. 
Neumann boundary conditions are imposed on the curved lateral surface, 
while the top and bottom planar faces are treated as Dirichlet boundaries. 

To perform the convergence analysis, 
we construct a sequence of four meshes by extruding a Voronoi tessellation of a circle along the $z$-axis. 
In Figure~\ref{fig:cyliVoro}, we show one of these meshes along with a clipped view, 
taken with respect to a plane perpendicular to the $xOy$ plane,
to highlight the extrusion process used to generate the mesh. 
We consider VEM approximation degrees $k=1$ and $k=2$.

\begin{figure}[!htb]
\centering
\includegraphics[width=0.8\textwidth]{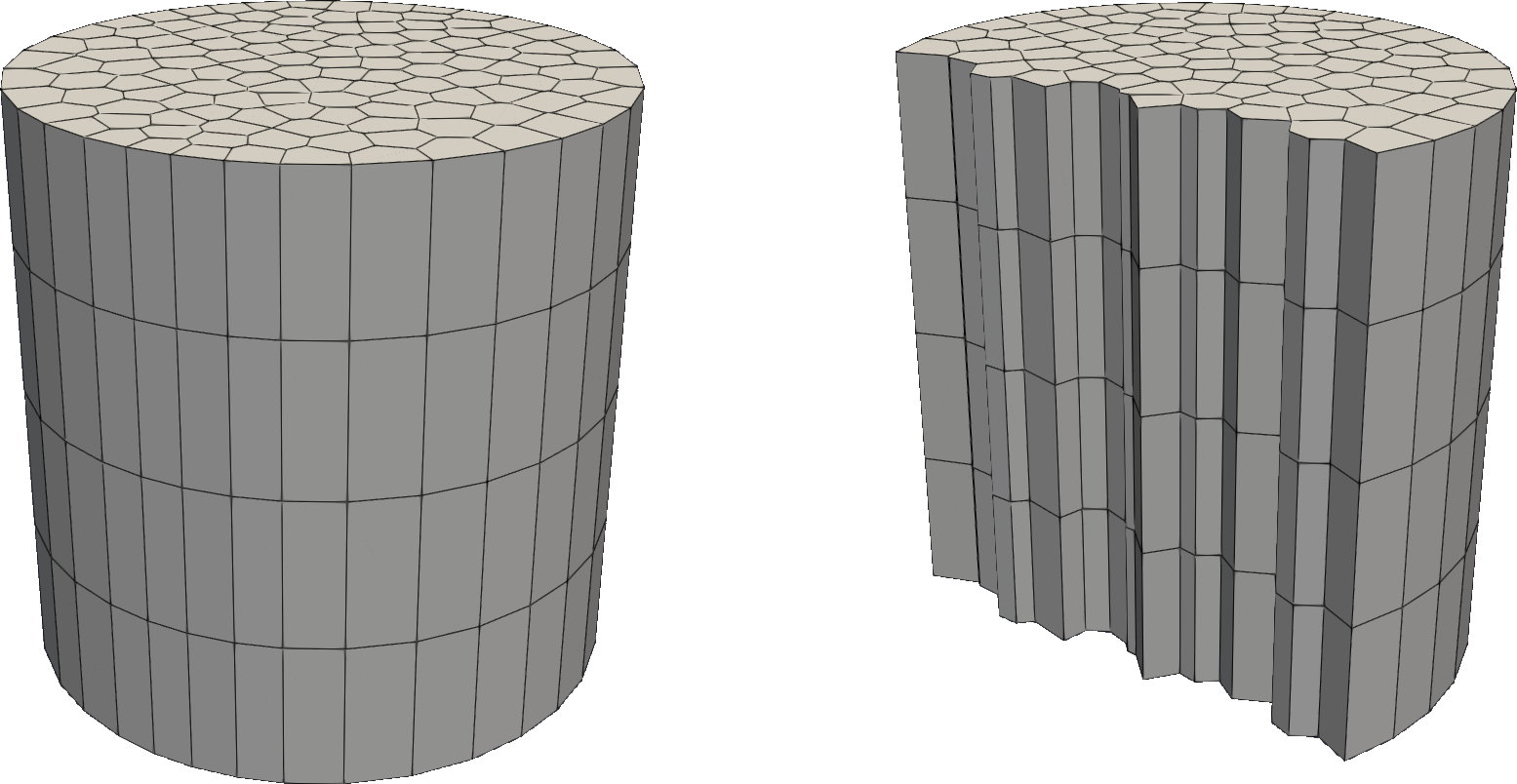}
\caption{The full cylindrical mesh, on the left, and a clipped view right highlighting the internal extruded structure, on the right.}
\label{fig:cyliVoro}
\end{figure}

It is worth noting that the resulting meshes feature curved quadrilateral faces on the lateral surface. 
From an implementation standpoint, the curved geometry of the cylinder is handled effectively; 
indeed, all required geometrical quantities, such as surface normals or tangents,
are computed via an exact parametrization of the quadrilateral faces, 
defined on the reference unit square $[0,1]^2$.

Here, the global discrete space \(\mathbf{V}^k_h(\Omega_h)\) is built in such a way that 
all quadrilaterals lying on the lateral surfaces of the cylinder are non-conforming faces,
while the remaining faces are conforming.

The convergence plots of $e_{L^2}$ and $e_{H^1}$ for VEM approximation degrees $k=1$ and $2$ are reported in Figure \ref{fig:test2-results}. 
Even in this curved case, where the domain geometry is exactly represented, the convergence rates for all error indicators remain optimal; specifically, we observe $\mathcal{O}(h^{k+1})$ for the $L^2$-error and $\mathcal{O}(h^k)$ for the $H^1$-error. As a comparison, note that the expected error rates for a second-order finite element scheme ($k=2$) based on a tetrahedral mesh family, interpolatory at the domain boundary, would be $\mathcal{O}(h^{2})$ and $\mathcal{O}(h^{3/2})$, respectively.

\begin{figure}[htbp]
\centering
\begin{tabular}{cc}
\includegraphics[width=0.42\textwidth]{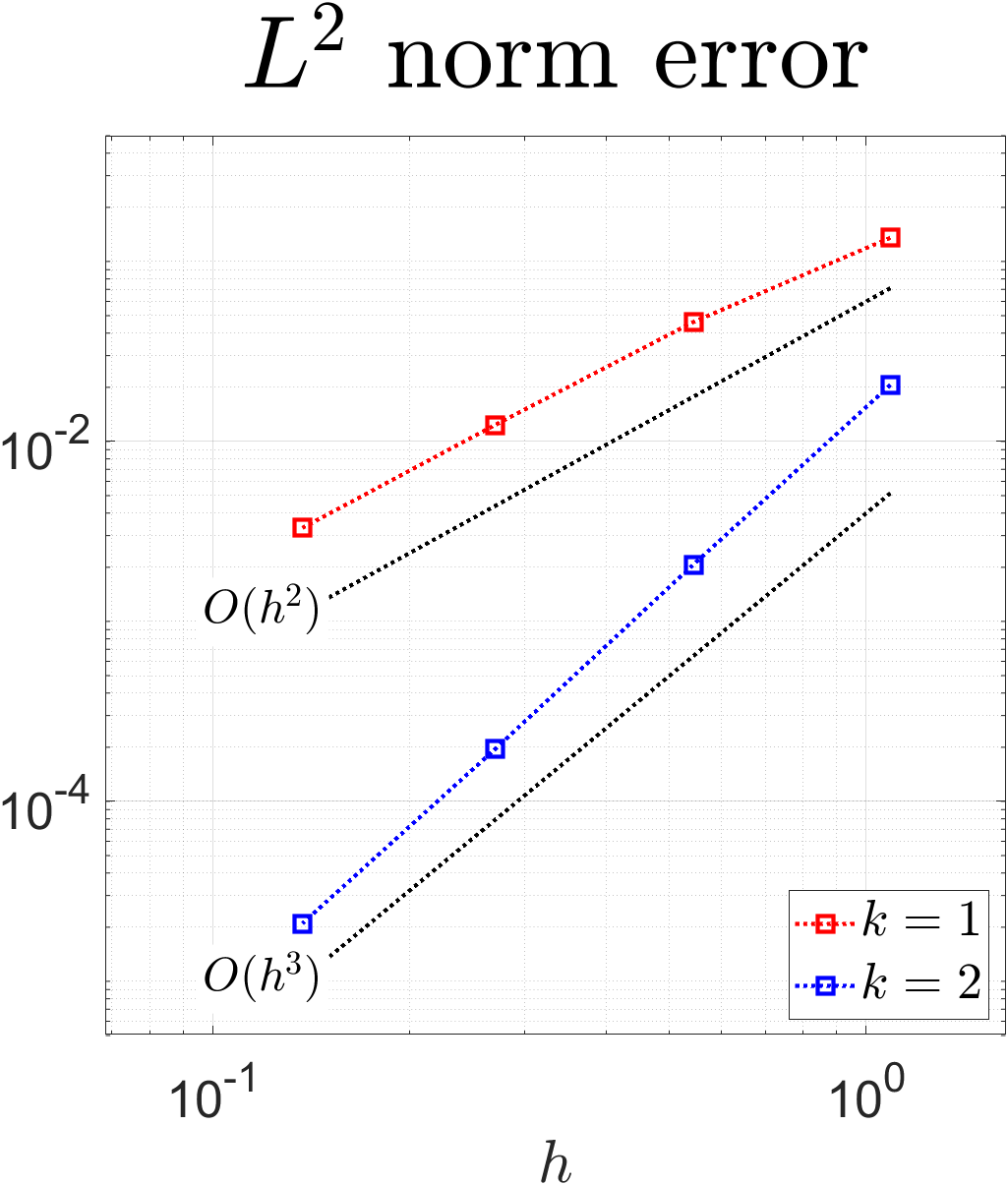}  &
\includegraphics[width=0.42\textwidth]{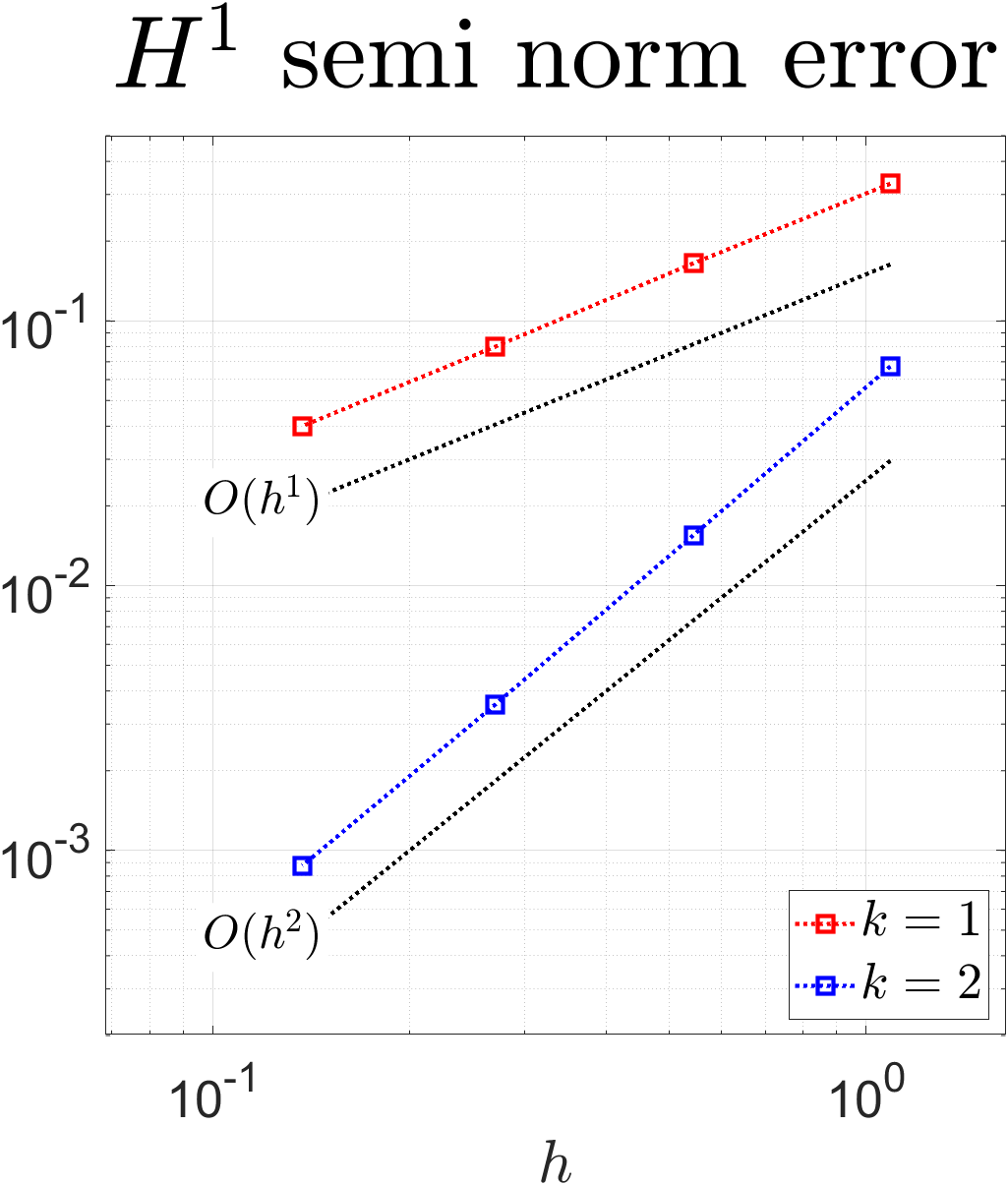}
\end{tabular}
\caption{Planar and curved faces: convergence lines of $e_{L^2}$ and $e_{H^1}$.}
\label{fig:test2-results}
\end{figure}

\subsection{Curved boundary tessellated with arbitrary polygons}\label{sec:convGenCur}

In this section, we provide a proof of concept of the applicability of the proposed method 
to meshes where the curved boundary is tessellated with arbitrary polygons.

Let $\Omega$ be the sphere centered at the origin with radius 1.
We set $\lambda = \mu = 1$ and the data of the problem in such a way that 
the exact displacement field is given by 
\[
\uu(x, y, z) = 
(1 - x^2-y^2-z^2)
\begin{pmatrix}
e^x \\
e^y \\
e^z
\end{pmatrix}.
\]
We consider fully homogeneous Dirichlet boundary conditions on the entire boundary, i.e.,
$\Gamma_N=\emptyset$, $\Gamma_D=\partial \Omega$ and $\uu|_{\partial \Omega} = \mathbf{0}$.

To generate a sequence of four meshes with decreasing mesh size, 
we start from a Voronoi tessellation produced via \texttt{voro++}~\cite{voro++},
then optimized through Lloyd's algorithm. 
Since \texttt{voro++} clips the Voronoi cells using planes to make a mesh constrained to a sphere, 
the resulting boundary faces are planar but discontinuous, see Figure~\ref{fig:VORO++} where we enhanced the gaps for a better vizualization. 
\begin{figure}[!htb]
\centering
\includegraphics[width=0.40\textwidth]{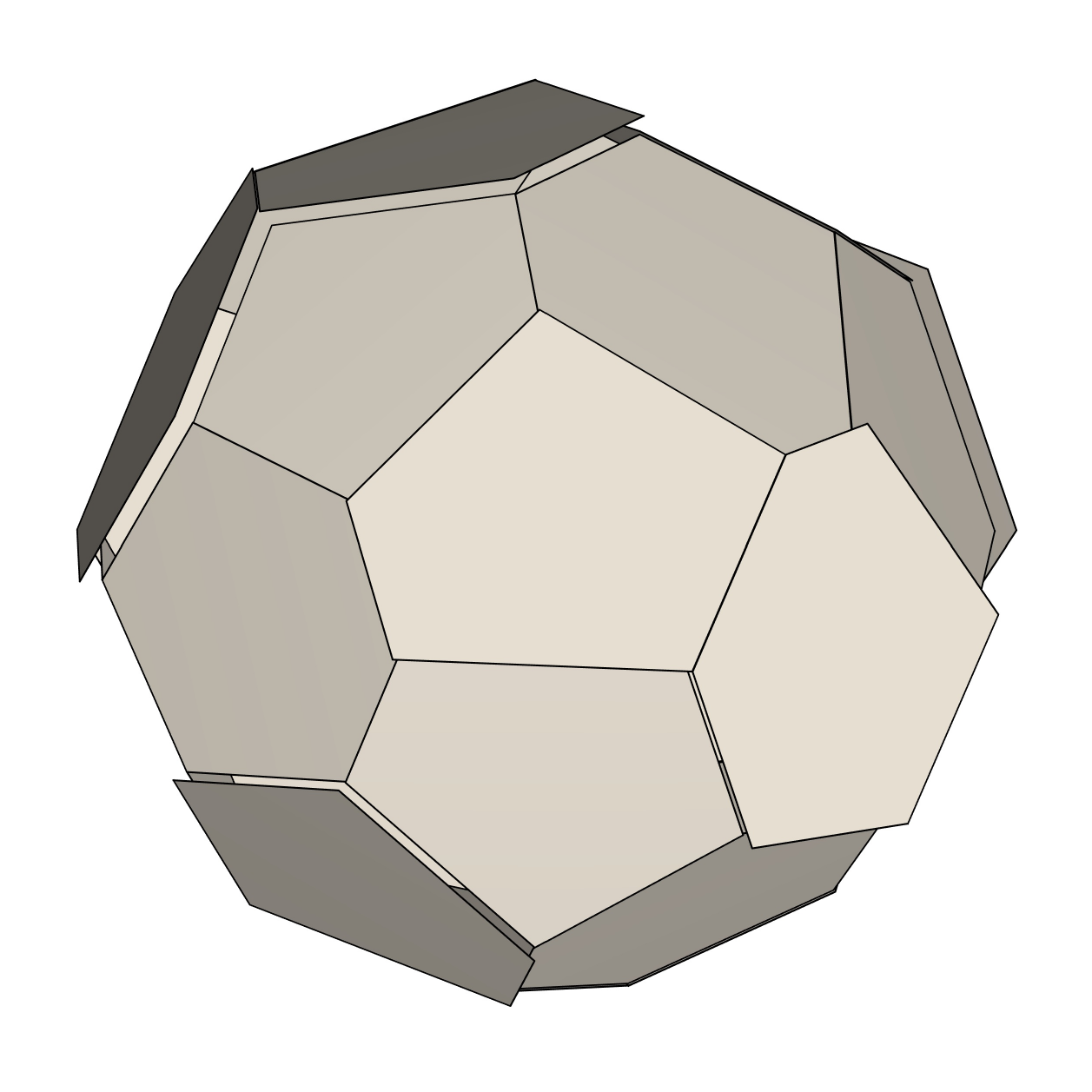}
\caption{The output of \texttt{voro++} (with enhanced gaps for better visualization)}
\label{fig:VORO++}
\end{figure}
To ensure that the boundary vertices accurately represent the geometry,
we project them onto the unit sphere. 
To approximate the spherical surface over these polygons and solve the PDE, 
we employ a mapping approach where each map is constructed for each element based on the number of edges of the physical polygon, i.e.,
given a physical polygon with $n_e$ edges, 
we define a map starting from a reference regular polygon with exactly the same number of vertices. 
Within this framework, we define two different approaches,
see Fig. \ref{fig:LIN+QUAD}:
\begin{itemize}
\item \textit{Linear approximation of geometry} (LIN): the computational surface is defined by interpolating the $n_e$ vertices of the polygon using Wachspress functions.
This results in a non-planar polygon with straight edges;
\item \textit{Quadratic approximation of geometry} (QUAD):
for each straight edge of the polygon, the midpoint is projected onto the sphere, 
and an arc of a parabola is constructed to connect the vertices through this projected midpoint.
Then, the interior is approximated by combining Wachspress functions to interpolate these parabolic arcs. 
This construction is based on the second-order serendipity basis functions introduced in \cite{shavelikeabomber}.
\end{itemize}
\begin{figure}[!htb]
\centering
\includegraphics[width=0.35\textwidth]{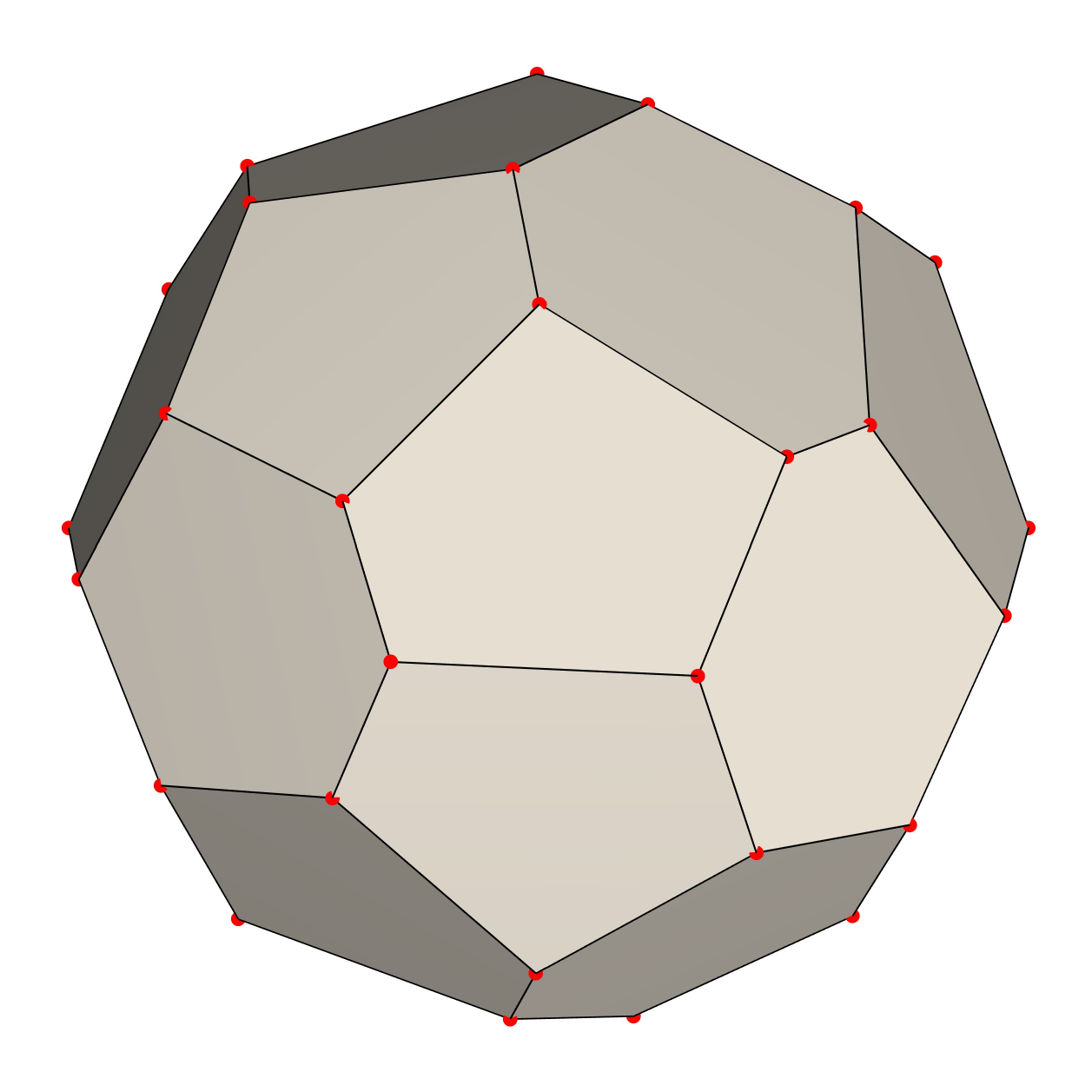}
\includegraphics[width=0.35\textwidth]{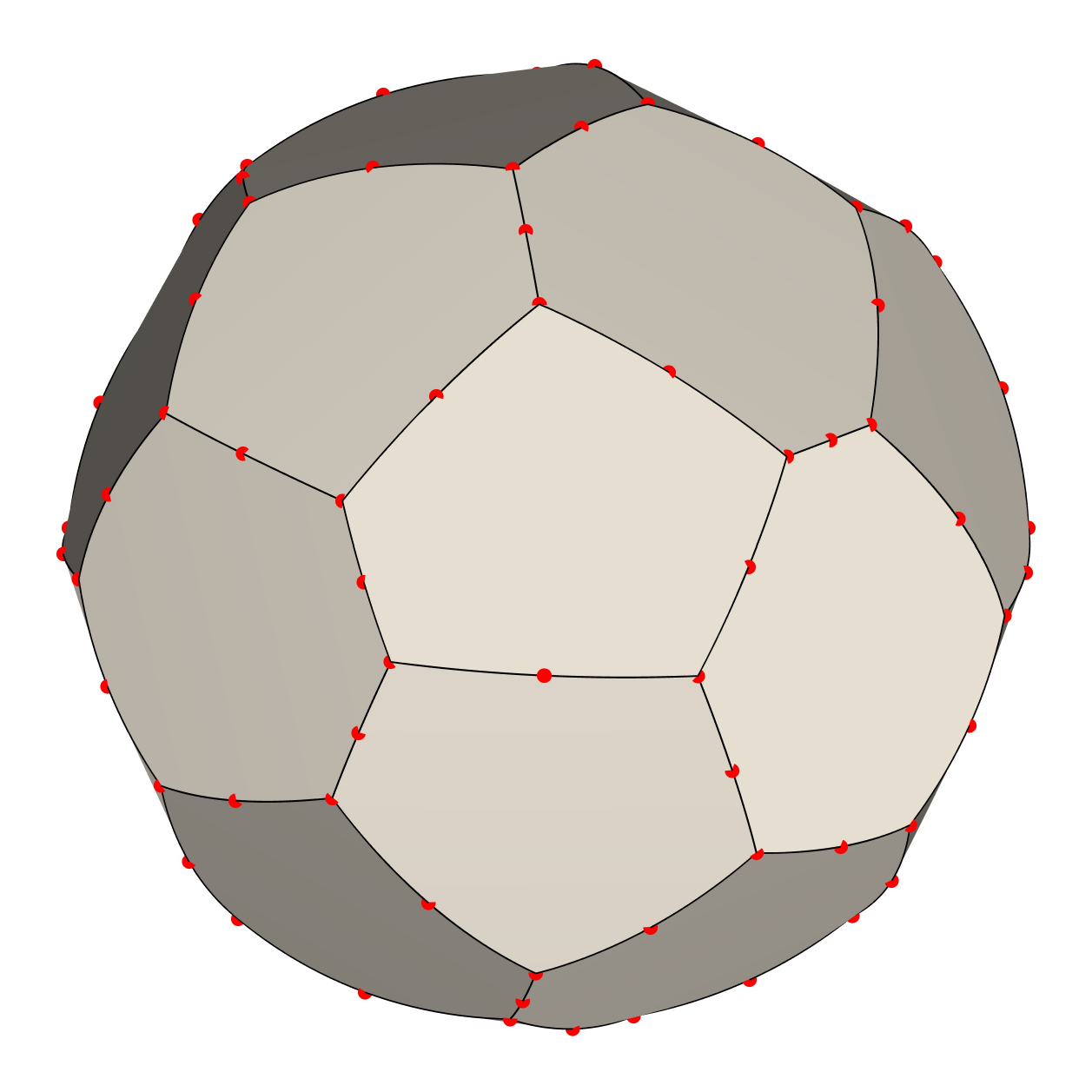}
\caption{The geometrical approximation LIN (left) and QUAD (right). All red points lie exactly on the unit sphere.}
\label{fig:LIN+QUAD}
\end{figure}
The global discrete space $\mathbf{V}^k_h(\Omega_h)$ is constructed such that all faces lying on $\partial \Omega$ are non-conforming,
while the remaining internal faces are conforming.
We analyze the convergence behavior for VEM degrees $k=1$ and $k=2$, 
approximating the spherical surface using both the Linear and Quadratic geometry approaches. 
Specifically, we consider the following combinations:
\begin{itemize}
\item $k=1$ with linear geometry (Lin),
\item $k=2$ with linear geometry (Lin),
\item $k=2$ with quadratic geometry (Quad).
\end{itemize}

From a theoretical perspective, 
we expect the following behaviors which are confirmed by the numerical results reported in Figure~\ref{fig:test3-results}.

Specifically, for $k=1$ with linear geometry, 
we recover the optimal convergence rates for both $e_{L^2}$ and $e_{H^1}$. 
Indeed, in this case, the geometric error carries sufficient accuracy to maintain the optimal rates of the VEM solution.

On the contrary, for $k=2$, the optimal convergence rates for both $e_{L^2}$ and $e_{H^1}$ are recovered only 
if the curved geometry is approximated with sufficient accuracy.
This fact is confirmed by the numerical experiments: 
the quadratic geometry leads to much lower error values, especially for the $L^2$ error norm which initially exhibits an error reduction rate ${\mathcal O}(h^3)$ instead of ${\mathcal O}(h^2)$ as for the linear geometry approximation. 
However, the rate of convergence for $k=2$ with quadratic geometry plateaus in the final iterations.
This behavior is likely due to the geometric approximation error becoming dominant over the PDE discretization error and is related to the fact that the serendipity approximation adopted in the quadratic approach does not guarantee an ${\mathcal O}(h^3)$ geometric error. 
To further validate this observation, Figure \ref{fig:geoerror} plots the surface area and volume errors between the exact and approximated geometries. It is evident that, even with a quadratic (Quad) approximation, these errors do not decay at a rate of $O(h^3)$. This is significant, as $O(h^3)$ is the required order to achieve the optimal $L^2$ convergence rate for displacement when using second-order ($k=2$) elements.

Different approaches, such as an exact parametric representation of the spherical surface or using a graph-like approach, could cure such issue. Since the focus of the present experiments is to showcase some simple example of application of our conforming-nonconforming VEM element, rather than investigating all its possibilities, exploring such directions is beyond the scope of this article.

\begin{figure}[htbp]
\centering
\begin{tabular}{cc}
\includegraphics[width=0.42\textwidth]{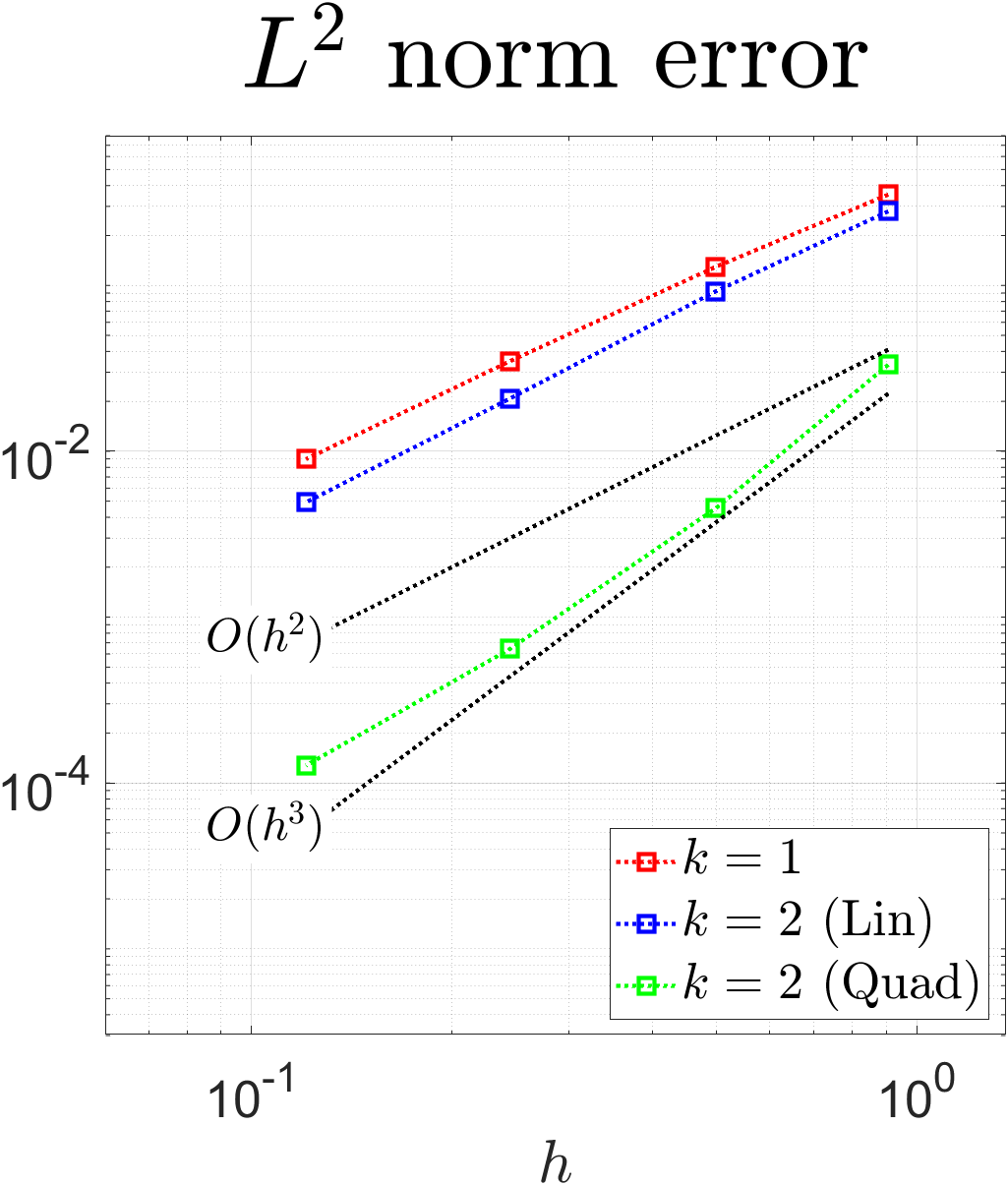}  &
\includegraphics[width=0.42\textwidth]{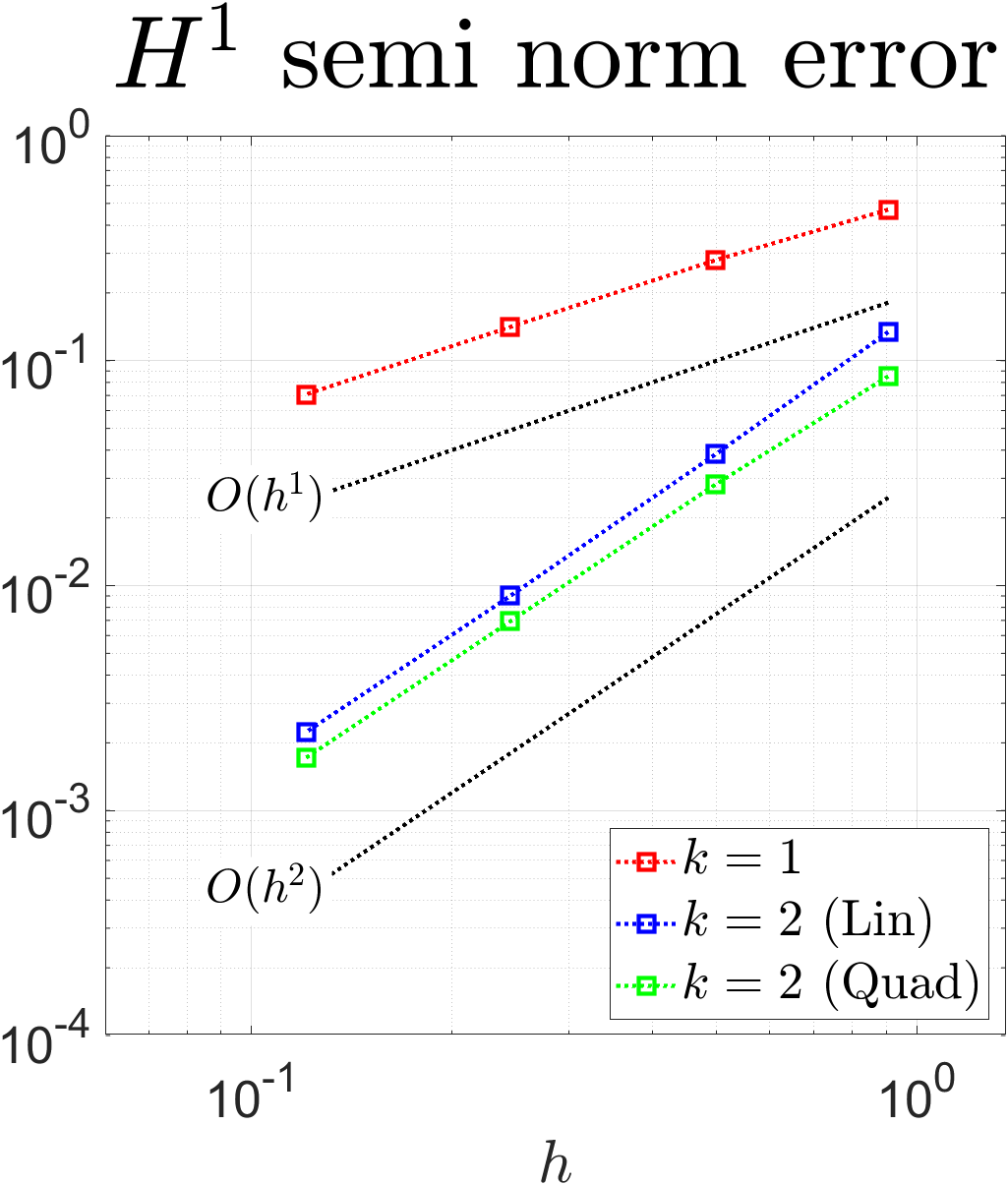}
\end{tabular}
\caption{Curved boundary tessellated with arbitrary polygons: convergence lines of $e_{L^2}$ and $e_{H^1}$.}
\label{fig:test3-results}
\end{figure}

\begin{figure}[htbp]
\centering
\begin{tabular}{cc}
\includegraphics[width=0.42\textwidth]{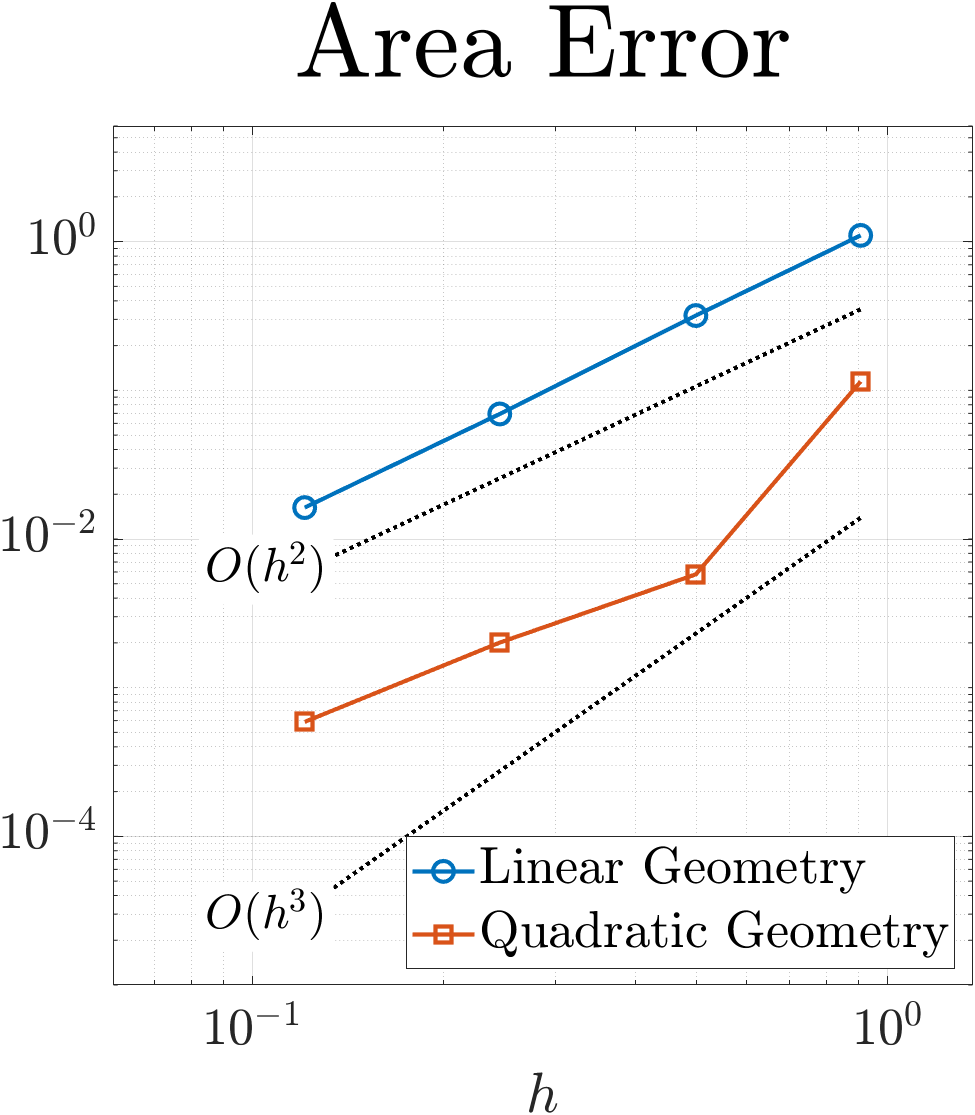}  &
\includegraphics[width=0.42\textwidth]{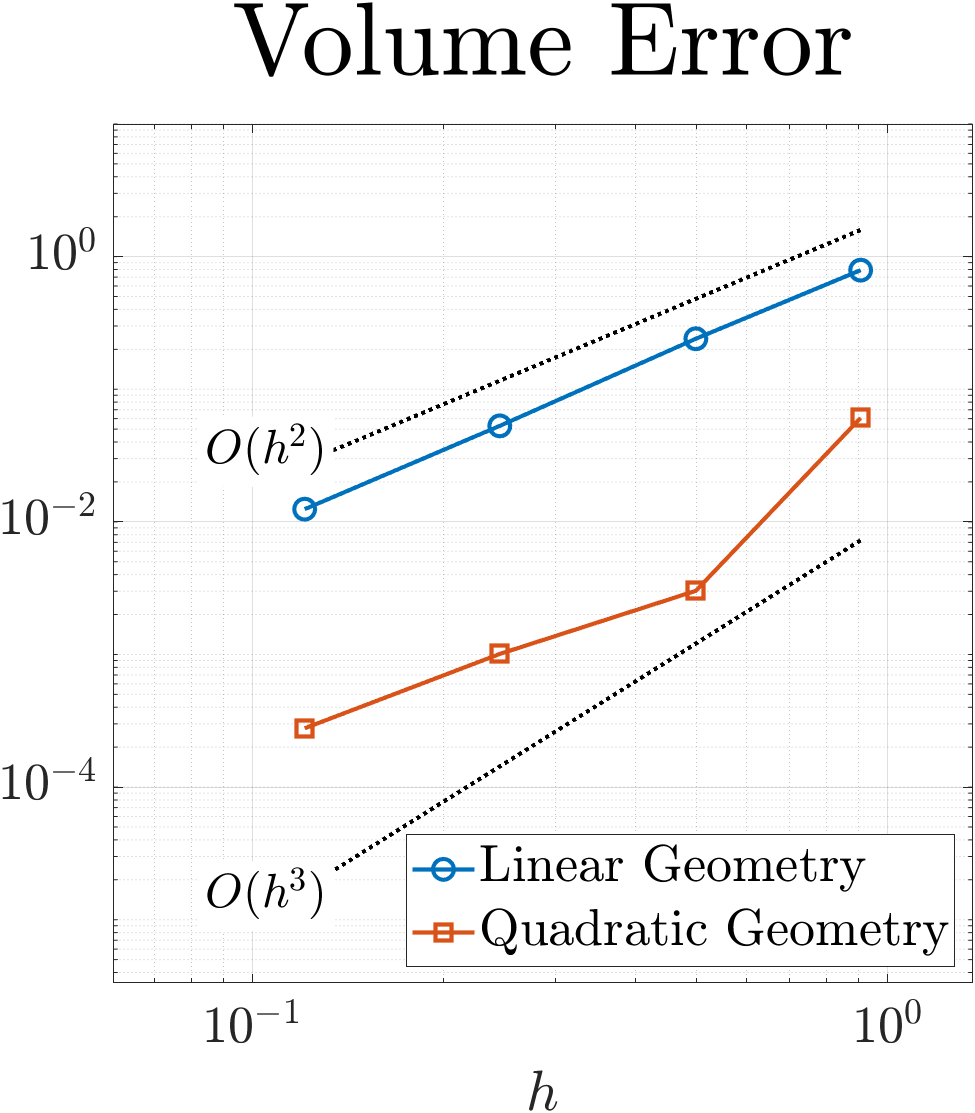}
\end{tabular}
\caption{Error in approximating the surface area and the volume of the sphere.} 
\label{fig:geoerror}
\end{figure}

\smallskip
\begin{center}
{\bf Aknowledgements}
\end{center}

\smallskip\noindent
LBDV, FD and MT has been partially funded by the European Union (ERC Synergy, NEMESIS, project number 101115663).
Views and opinions expressed are however those of the authors only and do not necessarily reflect those of the European Union or the ERC Executive Agency. 
All the authors are members of the INdAM Research Group GNCS and acknowledge partial support from INdAM-GNCS.

\bibliographystyle{plain}
\bibliography{biblio2}
\end{document}